\documentclass{compositio}
\usepackage{amsmath}
\usepackage{enumerate}
\usepackage{amsfonts}
\usepackage{enumerate}
\usepackage{amsthm}
\usepackage{latexsym}
\usepackage{amssymb}
\usepackage[all]{xy}
\usepackage{psfrag}
\usepackage{epsfig}
\usepackage{color}
\usepackage[all]{xy}
\usepackage{hyperref}

\usepackage{mathbbol}
\usepackage{bold-extra}
\usepackage{comment}
\numberwithin{equation}{section}
\begin{document}

\title[Poisson distribution corresponding to elliptic curves]{Poisson distribution of a prime counting function corresponding to elliptic curves}

\author{R. Balasubramanian}
\email{balu@imsc.res.in}
\address{Department of Mathematics, Institute of Mathematical Sciences, Chennai, India-600113}

\author{Sumit Giri}
\email{sumit.giri199@gmail.com}
\address{School of Mathematics, Tel Aviv University, P.O.B. 39040, Ramat Aviv, Tel Aviv 69978, Israel.}

\classification{11G05 (primary), 11G20, 11N05 (secondary).}
\keywords{average order, elliptic curves, primes k-tuple, Barban-Davenport-Halberstam theorem.}

\newtheorem{theoremm}{Theorem}
\newtheorem{theorem}{Theorem}
\newtheorem{lemma}{Lemma}
\newtheorem{rem}{Remark}
\newtheorem{note}{Note}
\newtheorem{prop}{Proposition}
\newtheorem{cor}{Corollary}
\newtheorem{defn}{Definition}
\newtheorem{conj}{Conjecture}

\renewcommand{\thetheoremm}{\Alph{theoremm}}
\renewcommand{\theenumi}{\Alph{enumi}}
\newcommand{\N}{\mathbb{N}}
\newcommand{\Z}{\mathbb{Z}}
\newcommand{\Q}{\mathbb{Q}}
\newcommand{\R}{\mathbb{R}}
\newcommand{\C}{\mathcal{C}}
\newcommand{\PP}{\mathcal{P}}
\newcommand{\m}{\Z /m \Z}
\newcommand{\F}{\mathbb{F}}
\newcommand{\f}{\hat{F}}
\newcommand{\g}{\hat{G}}
\newcommand{\D}{\Z /d \Z}
\newcommand{\n}{\Z /n \Z}
\newcommand{\A}{\mathcal{A}}
\newcommand{\h}{\mathcal{H}}
\newcommand{\B}{\mathcal{B}}
\newcommand{\OO}{\mathcal{O}}
\newcommand{\p}{\textfrak{p}}
\newcommand{\q}{\textfrak{q}}
\newcommand{\bb}{\textfrak{b}}
\newcommand{\aaa}{\textfrak{a}}
\newcommand{\pr}{\displaystyle \prod}
\newcommand{\s}{\displaystyle \sum}

\begin{abstract}
Let $E$ be an elliptic curve defined over rational field $\Q$ and $N$ be a positive integer. Now $M_E(N)$ denotes the number of primes $p$, such that the group $E_p(\F_p)$ is of order $N$. We show that $M_E(N)$ follows Poisson distribution when an average is taken over a large class of curves.
\end{abstract}

\maketitle

\section{Introduction}
Let $E$ be an elliptic curve defined over the field of rationals $\Q$. For a prime $p$ where $E$ has good reduction, we denote by $E_p$ the reduction of $E$ modulo $p$.
Let $\F_p$ be the finite field with $p$ elements and $E_p(\F_{p})$ be the group of $\F_p$ points over $E_p$. \par
We know $|E_p(\F_p)|=p+1-a_p(E)$ where $a_p(E)$ is the trace of the Frobenius morphism at $p$. By Hasse's theorem we know $|a_p(E)|<2\sqrt{p}$.
Also, it is well known that $E_p(\F_p)$ admits the structure of an abelian group of the form $\Z/m\Z\times \Z/mk\Z$, where $m$ divides $(p-1)$. We denote such groups by $G_{m,k}$. The question related to density of elliptic curve groups among all groups of the form $G_{m,k}$ has been addressed in \cite{Banks_Pappalardi_Shparlinski:2012}. Also, the question related to the primality of $E_p(\F_p)$  has been discussed in \cite{Balog_Cojocaru:2011}. \par

For a fixed positive integer $N$, we define the following prime counting function  
\begin{align}
 M_E(N):=\#\{p \text{ prime}:\, E\text{ has good reduction over $p$ and } |E_p(\F_p)|=N \}. \label{eq_1.1}
\end{align}
Here we note that the Hasse's theorem implies 
\begin{align}
 (\sqrt{p}-1)^2<&N<(\sqrt{p}+1)^2\nonumber
\intertext{or equivalently, }
N^-:=(\sqrt{N}-1)^2<&p<(\sqrt{N}+1)^2:=N^+.\label{eq_2}
\end{align}
This in turn implies that
\begin{align}
M_E(N)\ll \frac{\sqrt{N}}{\log (N+1)}.\label{eq_1}
\end{align} 
Using Chinese Reminder theorem, it is not difficult to construct a curve $E$ such that the upper bound in (\ref{eq_1}) is attained.

Also, it is not difficult to prove that \begin{align}
                                        \sum_{N\le x}M_E(N)=\pi(x)+O(\sqrt{x}).\label{eq_1.4}
                                       \end{align}

 Consequently, $M_E(N)$ is zero for most of the $N$'s. Under the assumption that $E_p(\F_p)$ is uniformly distributed over the range $[p^-,p^+]$, $M_E(N)$ is expected to be $\sim \frac{c}{\log N}$. See equation (4) in \cite{David_Smith:2013} for more details.\par
Now, for a pair of integers $(a,b)$, let $E_{a,b}$ be the elliptic curve defined by the Weierstrass equation
$$E_{a,b}:y^2=x^3+ax+b.$$
Also for $A,B>0$, we define the class of curves $\C(A,B)$  by \begin{align}
                                                               \C(A,B):=\{E_{a,b}:\, |a|\le A, |b|\le B, \Delta(E_{a,b})\neq 0\}.\label{eq_1.6}
                                                              \end{align}

 Now us recall Barban-Davenport-Halberstam conjecture 
\begin{conj}[\emph{BDH}]\label{conj_1} Let $\theta(x; q, a) = \underset{p\le x, {p\equiv a (\text{mod }q)}}{\sum} \log p$. Let $0 < \eta \le 1$ and $\beta > 0$ be real numbers. Suppose that $X$, $Y$, and $Q$ are positive real numbers satisfying $X^\eta \le Y \le X$
and $Y /(\log X)^\beta \le Q \le Y$. Then
$$\sum_{q\le Q }\sum_{\underset{(a,q)=1}{1\le a\le q}}|\theta(X + Y ; q, a)- \theta(X; q, a) -\frac{Y}{\phi(q)}|^2\ll_{\eta,\beta} Y Q \log X.$$
 \end{conj}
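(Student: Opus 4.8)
The natural plan is the one behind the classical Barban--Davenport--Halberstam theorem: reduce, via orthogonality of Dirichlet characters and passage to primitive characters, to a mean value of prime-counting sums twisted by characters, and then estimate this using the explicit formula and zero-density bounds for $L(s,\chi)$. Write $\theta(x,\chi)=\sum_{p\le x}\chi(p)\log p$. For $(a,q)=1$, orthogonality of the characters mod $q$ gives
\[
\theta(X+Y;q,a)-\theta(X;q,a)-\frac{Y}{\phi(q)}=\frac{1}{\phi(q)}\sum_{\chi\ne\chi_0}\overline{\chi}(a)\bigl(\theta(X+Y,\chi)-\theta(X,\chi)\bigr)+\frac{E_0}{\phi(q)},
\]
where the principal-character term is $E_0=\theta(X+Y)-\theta(X)-Y+O\bigl((\log X)^2\bigr)$, which is $\ll Y e^{-c\sqrt{\log X}}$ by the prime number theorem once $Y$ is a fixed power of $X$. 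Squaring and summing over the reduced residues mod $q$, the cross terms cancel by orthogonality, and
\[
\sum_{\substack{1\le a\le q\\(a,q)=1}}\Bigl|\theta(X+Y;q,a)-\theta(X;q,a)-\frac{Y}{\phi(q)}\Bigr|^2\ \ll\ \frac{1}{\phi(q)}\sum_{\chi\ne\chi_0}\bigl|\theta(X+Y,\chi)-\theta(X,\chi)\bigr|^2+\frac{Y^2 e^{-c\sqrt{\log X}}}{\phi(q)}.
\]

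Next I would sum over $q\le Q$ and replace each non-principal $\chi$ mod $q$ by the primitive character $\chi^{*}$ mod $q^{*}$ inducing it, the difference $\theta(x,\chi)-\theta(x,\chi^{*})$ being $\ll\log q$; regrouping by $q^{*}$, the total weight carried by a given primitive $\chi^{*}$ mod $q^{*}$ is $\sum_{q\le Q,\ q^{*}\mid q}\phi(q)^{-1}\ll\phi(q^{*})^{-1}\log X$. After this bookkeeping the conjecture reduces to an estimate of the shape
\[
\sum_{q\le Q}\frac{1}{\phi(q)}\sum_{\chi\bmod q}^{*}\bigl|\theta(X+Y,\chi)-\theta(X,\chi)\bigr|^2\ \ll\ YQ,
\]
the star denoting primitive characters; everything preceding this point is bookkeeping, and the hypothesis $Q\ge Y(\log X)^{-\beta}$ is used only here, to dispose of small moduli by Siegel--Walfisz and to keep the accumulated errors below $YQ\log X$ (the logarithmic factors require Montgomery's more careful accounting but behave as in the classical case, where the $Y\asymp X$ instance is the Davenport--Halberstam estimate).

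To prove the displayed mean value I would pass to $\psi(x,\chi)=\sum_{n\le x}\Lambda(n)\chi(n)$, at negligible cost, and use the truncated explicit formula
\[
\psi(X+Y,\chi)-\psi(X,\chi)=-\sum_{|\gamma|\le T}\frac{(X+Y)^{\rho}-X^{\rho}}{\rho}+O\!\left(\frac{X(\log qX)^2}{T}\right)
\]
for primitive $\chi$ mod $q>1$, take $T$ equal to $X/Y$ times a small power of $\log X$ so the error term is negligible, insert the bound $\bigl|\frac{(X+Y)^{\rho}-X^{\rho}}{\rho}\bigr|\ll Y X^{\beta-1}$ with $\beta=\Re\rho$, and split the zeros into dyadic ranges in $\beta$ and in $|\gamma|$. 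The contribution of zeros with $\beta\in[\sigma,\sigma+\Delta]$ and $|\gamma|\le T$ is bounded, after Cauchy--Schwarz and summation over $q$ and $\chi$, by $Y^2 X^{2\sigma-2}\sum_{q\le Q}\sum_{\chi}^{*}N(\sigma,T,\chi)$, and here the Montgomery--Huxley log-free zero-density estimate $\sum_{q\le Q}\sum_{\chi}^{*}N(\sigma,T,\chi)\ll (Q^2T)^{c(1-\sigma)}(\log QT)^{A}$, together with the classical zero-free region (Siegel--Walfisz disposing of a possible exceptional character), is exactly what lets one sum the dyadic pieces down to $\ll YQ$.

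The crux --- and the reason the statement is only a conjecture --- is this last step when $\eta$ is small. With $Y=X^{\eta}$ one is forced to take $T\approx X^{1-\eta}$, and one then needs strong upper bounds for the number of zeros of $L(s,\chi)$ with $\beta$ near $1$ and $|\gamma|$ up to $X^{1-\eta}$, averaged over $q\le Q\le X^{\eta}$ --- in effect a prime number theorem in the short interval $[X,X+X^{\eta}]$ uniform over arithmetic progressions. For $\eta$ close to $1$ (how close depends on the zero-density exponent $c$; $\eta=1$, after a dyadic splitting, is the classical Barban--Davenport--Halberstam theorem of Davenport--Halberstam and Montgomery) the plan is unconditional, but driving $\eta$ toward $0$ demands zero-density information for Dirichlet $L$-functions far beyond what is currently available, and the alternative route through Gallagher's lemma and hybrid fourth-moment estimates for Dirichlet polynomials meets the same wall. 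So I expect the honest outcome of this plan to be a conditional proof valid for $\eta$ in some explicit subinterval of $(0,1]$, with the full range as stated remaining open.
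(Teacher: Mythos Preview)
The statement you were asked to prove is labelled \emph{Conjecture}~1 in the paper, and the paper does not prove it. It is the Barban--Davenport--Halberstam conjecture for primes in short intervals, stated as a standing hypothesis under which Theorem~A (due to David--Smith) and Theorem~3 are obtained; the paper never attempts a proof, nor claims one. So there is no ``paper's own proof'' to compare your proposal against.

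To your credit, you recognise this yourself in the final paragraph: you correctly identify that the classical Barban--Davenport--Halberstam argument (orthogonality, reduction to primitive characters, explicit formula, zero-density estimates) goes through for $\eta$ close to $1$ but breaks down as $\eta\to 0$ because one would need zero-density information with $T\approx X^{1-\eta}$ averaged only over moduli $q\le X^{\eta}$, which is beyond current technology. Your sketch of the method is accurate as an outline of what is known and why the general statement remains conjectural; but it is not, and cannot be, a proof of the conjecture as stated for all $0<\eta\le 1$. The appropriate response here is simply that no proof exists in the paper (or in the literature) for the full range, and your write-up is better described as a discussion of the status of the conjecture than as a proof proposal.
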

 
 Under the above hypothesis concerning short interval distribution of primes in arithmetic progressions, David and Smith\cite{David_Smith:2013,David_Smith:2014} proved that 
 \begin{theoremm}\label{thmm_1} Let Conjecture \ref{conj_1} be true for some $0<\eta<\frac{1}{2}$.
If $A, B\ge \sqrt{N}(\log N)^{1+\gamma}\log \log N$ and that $AB\ge N^{\frac{3}{2}}(\log N)^{2+\gamma}\log \log N$, then for any odd integer $N$, we have 
\begin{align}
 \frac{1}{\#\C(A,B)}\sum_{E\in \C(A,B)}M_E(N)&=K(N)\frac{N}{\phi(N)\log N}+O(\frac{1}{(\log N)^{1+\gamma}}),
\intertext{with }
K(N):=\prod_{p\nmid N}\left( 1-\frac{(\frac{N-1}{p})^2p+1}{(p-1)^2(p+1)}\right)&\prod_{{p\mid N}}\left(1-\frac{1}{p^{\nu_p(N)}(p-1)}\right), \label{eq_1.5} \end{align}
where $\nu_p$ denotes the usual $p$-adic valuation where $\nu_p(n)$ and $\left(\frac{n-1}{p}\right)$ is the Kronecker symbol.\par 
\end{theoremm}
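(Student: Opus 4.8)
The plan is to unfold the family average by interchanging the order of summation, to reduce the count over each prime to a Hurwitz class number via the Eichler--Deuring correspondence, and then to extract the Euler product $K(N)$ from the resulting average of an $L$-value over the Hasse interval, whose error term is governed by Conjecture~\ref{conj_1}.

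First I would interchange the two summations and invoke \eqref{eq_2} to write
\[
\sum_{E\in\C(A,B)}M_E(N)=\sum_{N^-<p<N^+}\#\bigl\{E_{a,b}\in\C(A,B):\ p\nmid\Delta(E_{a,b}),\ |E_{a,b}(\F_p)|=N\bigr\}.
\]
For a fixed prime $p$ in the Hasse interval, the inner condition depends only on $(a,b)\bmod p$ and amounts to prescribing the trace of Frobenius to be $t:=p+1-N$; by the Eichler--Deuring / Schoof formula the number of residue classes $(a,b)\bmod p$ with $\Delta\ne 0$ and trace $t$ is $\tfrac{p-1}{2}H(4p-t^2)$, up to a lower-order correction from the classes with $j\in\{0,1728\}$, where $H$ is the Hurwitz class number and $4p-t^2=4N-(p-N-1)^2>0$. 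Counting, for each admissible residue class, the lattice points $(a,b)$ lying in the box $|a|\le A$, $|b|\le B$ produces a main term $\tfrac{4AB}{p^2}\cdot\tfrac{p-1}{2}H(4p-t^2)$ together with a discrepancy. Since $\#\C(A,B)=4AB+O(A+B)$, the main term of the average is $\sum_{N^-<p<N^+}\tfrac{p-1}{2p^2}H(4p-t^2)$, the sum being over primes.

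I would evaluate this main term with Dirichlet's class number formula, which expresses $H(4p-t^2)$ as $\tfrac1\pi\sqrt{4p-t^2}\,L(1,\chi_d)$ with $d=t^2-4p=(p-N-1)^2-4N$, up to non-fundamental-discriminant corrections, and truncate the Dirichlet series $L(1,\chi_d)=\sum_n\tfrac1n\bigl(\tfrac dn\bigr)$ at some level $Z$. Replacing the sum over primes by an integral against $du/\log u$ (prime number theorem) pulls out the factor $1/\log N$, and since $\int_{N^-}^{N^+}\sqrt{4u-(u-N-1)^2}\,du=2\pi N$, the remaining piece is essentially the average of $L\bigl(1,\chi_{(p-N-1)^2-4N}\bigr)$ over $p$ in the Hasse interval. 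Because $\bigl(\tfrac{(p-N-1)^2-4N}{n}\bigr)$ is periodic in $p$ modulo $4n$, that average decomposes over residue classes modulo $4n$ and, once $p$ is allowed to range over the whole interval with density $1/\phi(4n)$ per coprime class, the sum over $n$ reassembles into an Euler product over primes $\ell$ of the local densities of pairs $(a,b)$ modulo powers of $\ell$ compatible with $|E(\F_p)|=N$. Separating $\ell\nmid N$ from $\ell\mid N$ reproduces the two products in \eqref{eq_1.5}, the factor $N/\phi(N)$ being exactly the contribution of $\ell\mid N$, forced by the congruences $p\equiv 1\pmod m$ that the group structure $\Z/m\Z\times\Z/(N/m)\Z$ imposes. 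Carrying out this Euler-factor computation and checking convergence is laborious but routine, and the oddness of $N$ is what keeps the prime $2$ and the $2$-adic factor harmless.

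The hard part will be the error analysis. The trivial bound for the box-versus-residue-class discrepancy, summed over the $\asymp N^{3/2}$ admissible classes per prime and then over the $\asymp\sqrt N/\log N$ primes, exceeds the main term $\asymp AB/\log N$ by a factor $\gg\sqrt N$, so substantial cancellation is unavoidable. Expressing the discrepancy through the truncated $L$-series, applying Cauchy--Schwarz in the variable $n$, and again using that $\bigl(\tfrac{(p-N-1)^2-4N}{n}\bigr)$ is a character of $p\bmod 4n$ reduces matters to the second moment
\[
\sum_{q\le Q}\ \sum_{\substack{1\le a\le q\\(a,q)=1}}\bigl|\theta(X+Y;q,a)-\theta(X;q,a)-\tfrac{Y}{\phi(q)}\bigr|^2,
\]
with $X\asymp N$ and $Y=N^+-N^-\asymp\sqrt N$ --- which is why Conjecture~\ref{conj_1} is needed with $\eta$ just below $\tfrac12$ --- the modulus $q=4n$ running up to $Q$ essentially equal to the truncation level $Z$, chosen in the admissible window $Y/(\log X)^\beta\le Q\le Y$; the tail of the $L$-series beyond $Z$ is disposed of by the cancellation from summing over $p$ (a large-sieve input). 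The hypotheses $A,B\ge\sqrt N(\log N)^{1+\gamma}\log\log N$ and $AB\ge N^{3/2}(\log N)^{2+\gamma}\log\log N$ are precisely the thresholds that allow the resulting bound, together with the secondary errors (the correction per prime in the Eichler--Deuring formula, the $j\in\{0,1728\}$ classes, and the non-fundamental discriminants), to be absorbed into the claimed $O\bigl((\log N)^{-1-\gamma}\bigr)$; verifying that these thresholds suffice is the technical heart of the argument.
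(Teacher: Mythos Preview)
The paper does not prove Theorem~\ref{thmm_1} at all: it is stated as a result of David and Smith \cite{David_Smith:2013,David_Smith:2014} (with the extension to all $N$ credited to \cite{Chandee_David_Koukoulopoulos_Smith:2015}) and is used as an input in the proof of Theorem~\ref{Th_2}. So there is no ``paper's own proof'' to compare against.

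That said, your outline is a faithful high-level sketch of the David--Smith argument: interchanging the sum over $E$ and over $p$, invoking Deuring's theorem to pass to Kronecker/Hurwitz class numbers, opening $H(D_N(p))$ via the class number formula as a truncated Dirichlet series in $\bigl(\tfrac{D_N(p)}{\cdot}\bigr)$, and then averaging over the short Hasse interval $N^-<p<N^+$ so that the error is governed by the Barban--Davenport--Halberstam quantity in Conjecture~\ref{conj_1} with $Y\asymp\sqrt N$. One caution: your remark ``replacing the sum over primes by an integral against $du/\log u$ (prime number theorem)'' glosses over the point that the prime number theorem is \emph{not} available unconditionally in intervals of length $\asymp\sqrt N$; in the actual argument the expected main term $Y/\phi(q)$ is inserted by hand and it is Conjecture~\ref{conj_1} that controls the discrepancy in both the ``main'' and ``error'' parts simultaneously. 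The present paper itself only uses the class-number side of this machinery (see \eqref{eq_2.1}--\eqref{eq_2.2}, Proposition~\ref{prop_1}, and Lemma~\ref{lm_1.1}), leaving the derivation of Theorem~\ref{thmm_1} to the cited references.
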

In \cite{Chandee_David_Koukoulopoulos_Smith:2015}, Chandee, David, Koukoulopoulos and Smith extended this result over all $N$. \par
From [Theorem 1.7, \cite{Chandee_David_Koukoulopoulos_Smith:2015}], unconditionally, we also have 
\begin{align}
\frac{1}{\#\mathcal{C}(A,B)}\underset{E\in \mathcal{C}(A,B)}{\sum}M_E(N)\ll \frac{N}{\phi(N)\log N}=O\left(\frac{\log \log N}{\log N}\right)\label{eq_09} 
\end{align}
for large enough $A,B$.\\
Note that the above theorem is based on the assumption of Barban-Davenport-Halberstam conjecture for a particular range.
Martin, Pollack and Smith\cite{Martin_Pollack_Smith:2014} and authors\cite{Balasubramanian_Giri:2015} independently computed the mean value of $NK(N)/\phi(N)$ to show that Theorem \ref{thmm_1} is consistent with (\ref{eq_1.4}).\par 
In this paper we try to focus on the distribution of the function $M_E(N)$. In other words, if $N$ is a fixed integer and $E$ be any arbitrary chosen curve from a large class of curves, then what is the probability of the event $\{M_E(N)=\ell \}$ where $\ell$ is a positive integer.\par  
Under the assumption that primes are randomly distributed and reduction modulo two different primes are two independent events, one would expect the event $\{E\in \C: M_E(N)=\ell\}$ occurs with a probability $\sim \frac{1}{(\log N)^{\ell}}$. The main theorem of this paper is

\begin{theorem}\label{th_1}
 Let $\C(A,B)$ be as defined as in (\ref{eq_1.6}) and $N$ be a positive integer greater than $7$. If $L$ be a positive integer such that $A,B>N^{L/2}(\log N)^{1+\gamma}$ and $AB>N^{3L/2}(\log N)^{2+\gamma}$ for some $\gamma>0$, then for $1\le \ell\le L$ 
 \begin{align*}
  \frac{1}{\#\C(A,B)}\sum_{\underset{M_E(N)=\ell}{E\in \C(A,B)}}1=\frac{1}{\ell!}\left(\frac{1}{\#\C(A,B)}\sum_{E\in \C(A,B)}M_E(N)\right)^\ell\left(1+O\left(\frac{N}{\phi(N)\log N}\right)\right)+O\left(\frac{1}{N^{\frac{L-\ell}{2}}(\log N)^{\gamma}}\right),
 \end{align*}
where the \textquoteleft$O$\textquoteright constant in the last error term is independent of $\gamma$.
 \end{theorem}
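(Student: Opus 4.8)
The plan is to compute the \emph{binomial moments} $S_k := \sum_{E \in \C(A,B)}\binom{M_E(N)}{k}$ for $\ell \le k \le L$ and then recover the distribution by inclusion--exclusion. Opening the binomial coefficient gives
\[
S_k \;=\; \sum_{N^- < p_1 < \cdots < p_k < N^+}\#\{E \in \C(A,B) : |E_{p_i}(\F_{p_i})| = N \text{ for } i = 1,\ldots,k\},
\]
so $S_k$ counts the curves in the box whose reductions at $k$ distinct primes all have the prescribed number of points. The elementary identity $\mathbf{1}[m = \ell] = \sum_{k \ge \ell}(-1)^{k-\ell}\binom{k}{\ell}\binom{m}{k}$ (a finite sum, since $\binom{m}{k} = 0$ for $k > m$) yields
\[
\sum_{\substack{E \in \C(A,B)\\ M_E(N) = \ell}} 1 \;=\; \sum_{k \ge \ell}(-1)^{k-\ell}\binom{k}{\ell}\,S_k ,
\]
and the associated Bonferroni inequalities let us truncate this alternating sum at $k = L$ with a remainder dominated by $S_L$ and by the tail $\sum_{E : M_E(N) > L}\binom{M_E(N)}{\ell}\binom{M_E(N)-\ell-1}{L-\ell}$, which by crude estimates on binomial coefficients is itself $\ll_L S_L$.

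The heart of the argument is the evaluation of $S_k$ for $k \le L$. Put $n_N(p) := \#\{(a,b) \in (\Z/p\Z)^2 : \Delta(E_{a,b}) \not\equiv 0,\ |E_{a,b}(\F_p)| = N\}$. Reducing $(a,b)$ modulo $q = p_1\cdots p_k$ and applying the Chinese Remainder Theorem turns the inner count into a sum, over the $\prod_{i}n_N(p_i)$ admissible residue classes modulo $q$, of the number of integer points of the box $\{|a|\le A,\ |b|\le B\}$ in each class. Running the analysis behind Theorem~\ref{thmm_1} --- expressing $n_N(p)$ through Hurwitz--Kronecker class numbers by Deuring's theorem, the class number formula, and the distribution of primes in arithmetic progressions --- but now for the $k$-fold modulus, one should obtain
\[
\#\{E \in \C(A,B) : |E_{p_i}(\F_{p_i})| = N \ \forall i\} \;=\; \#\C(A,B)\prod_{i=1}^{k}\frac{n_N(p_i)}{p_i^2} \;+\; (\text{error}),
\]
whose main term \emph{factors} over the primes; the gain over the trivial bound comes from the fact that the residue-class-counting discrepancies have mean zero and small variance (controlled by Weil-type bounds on the relevant varieties). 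It is precisely here that the hypotheses $A, B > N^{L/2}(\log N)^{1+\gamma}$ and $AB > N^{3L/2}(\log N)^{2+\gamma}$ are used: for $k \le L$ they keep $q \asymp N^{k}$ small enough relative to the box that the error, once summed over $p_1,\ldots,p_k$ and combined over $k$, telescopes to an admissible size. Summing the factored main term over $p_1 < \cdots < p_k$ (the diagonal terms coming from collapsing the strict inequality being smaller by a factor $N^{-1/2+o(1)}$) and using that the case $k=1$, which is essentially Theorem~\ref{thmm_1}, identifies $\sum_{N^- < p < N^+}\frac{n_N(p)}{p^2}$ with $\mu := \frac{1}{\#\C(A,B)}\sum_{E\in\C(A,B)}M_E(N)$ up to the same error, one arrives at $S_k = \frac{\#\C(A,B)}{k!}\,\mu^{k}\big(1 + (\text{error})\big)$.

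Feeding this into the truncated inclusion--exclusion sum gives
\[
\sum_{\substack{E \in \C(A,B)\\ M_E(N) = \ell}} 1 \;=\; \#\C(A,B)\,\frac{\mu^{\ell}}{\ell!}\sum_{j=0}^{L-\ell}\frac{(-\mu)^{j}}{j!} \;+\; (\text{errors}) \;=\; \#\C(A,B)\,\frac{\mu^{\ell}}{\ell!}\big(e^{-\mu} + O(\mu^{L-\ell+1})\big) + (\text{errors}),
\]
which is the Poisson shape. By the unconditional bound \eqref{eq_09} one has $\mu \ll \frac{N}{\phi(N)\log N}$, so $e^{-\mu} = 1 + O(\mu)$, and when $\ell < L$ the truncation remainder $\mu^{L-\ell+1}$, the Bonferroni tail (which is $\ll_L S_L \ll \#\C(A,B)\,\mu^{L}$), and the diagonal corrections are all of relative size $O(\mu) = O\big(\frac{N}{\phi(N)\log N}\big)$ and hence get absorbed into the factor $1 + O\big(\frac{N}{\phi(N)\log N}\big)$; the accumulated equidistribution errors from $S_\ell,\ldots,S_L$ are polynomially small in $N$ and combine to the additive term $O\big(N^{-(L-\ell)/2}(\log N)^{-\gamma}\big)$. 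The case $\ell = L$ is the degenerate one: the truncated sum reduces to the single term $S_L$, only the upper bound $S_L$ together with a crude lower bound is available, and this is what forces the weaker error $O\big((\log N)^{-\gamma}\big)$ there.

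The main obstacle is the $k$-fold equidistribution estimate of the second paragraph: one must carry the delicate passage from class numbers to primes in arithmetic progressions (the step where Theorem~\ref{thmm_1} invokes Conjecture~\ref{conj_1}) through for the composite modulus $q = p_1\cdots p_k$, and verify that the errors genuinely telescope to the claimed size uniformly for $k \le L$ despite $q$ being allowed to exceed $A$ and $B$, and that the diagonal contributions (from collapsing $p_1 < \cdots < p_k$ to an unordered sum) and the Bonferroni tail coming from the primes beyond the $L$-th in the support of $M_E(N)$ remain of lower order. Granting these analytic inputs, the combinatorics producing the Poisson law is routine.
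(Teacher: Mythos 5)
Your strategy is essentially the paper's. Your binomial moments $S_k$ are, up to the normalization $k!/\#\C(A,B)$, exactly the quantities $\Omega(k,k)$ in the paper's argument, and the $k$-fold equidistribution statement you isolate is precisely Proposition~\ref{prop_2} combined with Deuring's theorem (Lemma~\ref{lm_0.1}(a)). The only structural difference is in the combinatorics: you expand the indicator of $\{M_E(N)=\ell\}$ as a full alternating sum of factorial moments truncated at $k=L$ with a Bonferroni tail, whereas the paper keeps only the $\ell$-th moment and removes the curves with $M_E(N)\ge \ell+1$ by an overcounting bound against the $(\ell+1)$-st moment (Lemma~\ref{lm_0.1}(b)); both bookkeepings yield the main term $\mu^{\ell}/\ell!$ with relative error $1+O(\mu)$, so this is a matter of taste. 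The one substantive caveat is that the step you flag as the main obstacle is where all of the paper's work lies and your proposal asserts it rather than proves it; on the other hand it is easier than you suggest. After the CRT reduction the relevant exponential sums separate over the primes into one-variable sums $\sum_{u}e\bigl((h_i s_i u^4+g_i t_i u^6)/p_i\bigr)$, each handled by Weil's bound (Lemma~\ref{lm_1.2}), so no multidimensional variety is needed, and the condition $q=p_1\cdots p_k\asymp N^{k}$ versus the box sizes is exactly what produces the error $\mathcal{E}_k(A,B,N)$ and, for $k=\ell$ under the stronger hypothesis $A,B>N^{L/2}(\log N)^{1+\gamma}$, the additive term $O\bigl(N^{-(L-\ell)/2}(\log N)^{-\gamma}\bigr)$. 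Moreover, the ``delicate passage from class numbers to primes in arithmetic progressions'' that worries you never enters Theorem~\ref{th_1}: since the statement compares the count to $\mu=\frac{1}{\#\C(A,B)}\sum_E M_E(N)$ itself rather than to $K(N)N/(\phi(N)\log N)$, one only needs the $k$-fold count to factor into one-fold counts, and Conjecture~\ref{conj_1} is not used. Granting Proposition~\ref{prop_2}, your argument is sound, including the correct treatment of the degenerate case $\ell=L$.
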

Now we know that if $X_N\sim \text{Poisson}(\lambda_N)$, for $N=1,2,\cdots$, then the probability mass function of $X_N$ is 
$$f_{X_N}(\ell)=\frac{(\lambda_N)^\ell e^{-{\lambda_N}}}{\ell !} \quad \text{ for }\ell=0,1,2,\cdots .$$ 
If we take $\lambda_N=\frac{1}{\#\mathcal{C}(A,B)}\underset{E\in \mathcal{C}(A,B)}{\sum}M_E(N)$, then in view of (\ref{eq_09}), one can see that if $L$ is large, then on an average $M_E(N)$ follows a limiting Poisson distribution with mean $\frac{1}{\#\mathcal{C}(A,B)}\underset{E\in \mathcal{C}(A,B)}{\sum}M_E(N)$ as $N\rightarrow \infty$. The integer $L$ in \emph{Theorem \ref{th_1}} is introduced to ensure the finiteness of the class $\mathcal{C}(A,B)$.

One can immediately see  that if one also assumes Conjecture \ref{conj_1}, as in Theorem \ref{thmm_1}, then the right hand side is asymptotic to $\frac{1}{\ell!}\left(\frac{NK(N)}{\phi(N)\log N}\right)^\ell$. 

In \cite{Kowalski:2006}, Kowalski raised a question related to the behavior of sums of the type 

\begin{align*}
 \sum_{N\le x}M_E(N)^r\quad \text{and} \sum_{\underset{M_E(N)\ge 2}{N\le x}}M_E(N).
\end{align*}

To answer this question, we start with the quantity 

\begin{align}\frac{1}{\#\C}\sum_{E\in \C}\sum_{\underset{N\le x}{M_E(N)\ge \ell}}M_E(N)^r&\label{eq_12}\end{align}
for two non negative integers $r$ and $\ell$.\\
Before stating our result related to (\ref{eq_12}), we shall introduce a sequence of constants $\{C(m)\}_{m=\ell}^\infty$, where $C(m)$ corresponds to the $m-$th moment of the function $NK(N)/\phi(N)$ where $K(N)$ as defined in (\ref{eq_1.5}).
More precisely,

\begin{align}
 C(m)=\prod_{p>2}\left(1-\frac{1}{(p-1)^2}\right)^m\prod_{p}\left(1+f_m(p)\right),\nonumber
 \end{align}
where,
 \begin{align}
 f_m(2)&=\frac{1}{2}\left(\left(\frac{2}{3}\right)^m-1\right)+2^m\sum_{j\ge 2}\frac{1}{2^j}\left(\left(1-\frac{1}{2^j}\right)^m-\left(1-\frac{1}{2^{j-1}}\right)^m\right),\nonumber\\
 f_m(p)&=\frac{1}{p}\left[\left(1-\frac{1}{(p-1)^2}\right)^{-m}\left(\left(1-\frac{1}{(p-1)^2(p+1)}\right)^m+\left(\frac{p}{p-1}\right)^{m}\left(1-\frac{1}{p(p-1)}\right)^m\right)-2\right]\nonumber\\
 &\quad +\left(\frac{p}{p-1}\right)^m\left(1-\frac{1}{(p-1)^2}\right)^m\sum_{j\ge 2}\frac{1}{p^j}\left(\left(1-\frac{1}{p^j(p-1)}\right)^m-\left(1-\frac{1}{p^{j-1}(p-1)}\right)^m\right).\label{eq_C}
\end{align}
It is easy to check that $C(1)=1$. It seems difficult to simplify the expression when $m>1$.

Also, for two integers $r\le \ell $, we construct a sequence $\{d_{\ell,r}(m)\}_{m=\ell}^{\infty}$ as follows

\begin{align}
 d_{\ell,r}(m)=\sum_{k=\ell}^m\frac{k^r}{k!}\frac{(-1)^{m-k}}{(m-k)!} \label{eq_23}
\end{align}

Here we note that $d_{\ell,r}(\ell)=\frac{\ell^r}{\ell!}$; Also $d_{1,1}(1)=1$ and  $d_{1,1}(m)=0$ for $m\ge 2$.

With these notations, our next theorem is as follows

\begin{theorem}\label{Th_1}
Let $r$ and $\ell$ be two positive integers with $r\le \ell$. Also suppose $\gamma_1$ be non negative integer and $\gamma_2$ is a positive real number with $1+\gamma_1\le \gamma_2$. Now if $\C(A,B)$ be defined as in (\ref{eq_1.6}) with $A, B>x^{\frac{\ell+\gamma_1}{2}}(\log x)^{1+\ell+\gamma_2}$ and $AB>x^{\frac{3(\ell+\gamma_1)}{2}}(\log x)^{2+\ell+\gamma_2}$, then  for any positive real number $x$,
\begin{align*}
 \frac{1}{\#\C(A,B)}\sum_{{E\in \C}}\sum_{\underset{M_E(N)\ge \ell}{N\le x}}M_E(N)^r&=\sum_{m=\ell}^{\ell+\gamma_1} {C(m)d_{\ell,r}(m)} Li_m(x)+O\left(\frac{x}{(\log x)^{1+\ell+\gamma_1}}\right),
 \end{align*}
where $C(m)$ and $d_{\ell,r}(m)$ are defined in (\ref{eq_C}) and (\ref{eq_23}) respectively and $Li_m(x)=\int_{2}^x \frac{1}{(\log t)^m}\, dt$.
 \end{theorem}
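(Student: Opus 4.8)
The plan is to reduce the theorem, by an elementary combinatorial identity, to evaluating the $\C(A,B)$-averaged falling‑factorial moments of $M_E(N)$ summed over $N\le x$, and then to run the multi‑prime counting machinery underlying Theorem \ref{thmm_1} together with the moment computation for $NK(N)/\phi(N)$. \textbf{Step 1 (a combinatorial identity).} First I would prove that for every integer $n\ge0$
\[
n^{r}\,\mathbf 1_{\{n\ge\ell\}}=\sum_{m\ge\ell}d_{\ell,r}(m)\,(n)_{m},\qquad (n)_{m}:=n(n-1)\cdots(n-m+1)=m!\binom{n}{m},
\]
the sum being finite since $(n)_{m}=0$ for $m>n$: writing $(n)_{m}=n!/(n-m)!$, interchanging the sums over $k$ and $m$ in (\ref{eq_23}), and using $\sum_{j\ge0}\binom{n-k}{j}(-1)^{j}=[\,n=k\,]$, the right‑hand side collapses to $n^{r}$ when $n\ge\ell$ and to $0$ otherwise. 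Specialising $n=M_E(N)$, noting that $(M_E(N))_m$ counts the ordered $m$-tuples of distinct primes $p$ (with good reduction) and $|E_p(\F_p)|=N$, and averaging over $E\in\C(A,B)$ and $N\le x$ yields the exact identity
\[
\frac{1}{\#\C(A,B)}\sum_{E\in\C}\sum_{\substack{N\le x\\ M_E(N)\ge\ell}}M_E(N)^{r}=\sum_{m\ge\ell}d_{\ell,r}(m)\,\Sigma_m(x),\qquad \Sigma_m(x):=\sum_{N\le x}\frac{1}{\#\C(A,B)}\sum_{E\in\C}(M_E(N))_m .
\]

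\textbf{Step 2 (the counting lemma and the main range $\ell\le m\le\ell+\gamma_1$).} Since every prime contributing to $M_E(N)$ lies in the Hasse window $(N^-,N^+)$, unfolding $(M_E(N))_m$ gives $\Sigma_m(x)=\sum_{N\le x}\sum_{p_1,\dots,p_m\in(N^-,N^+)\text{ distinct}}\#\{E\in\C(A,B):|E_{p_i}(\F_{p_i})|=N\ \forall i\}/\#\C(A,B)$. Applying, to the coprime moduli $p_1,\dots,p_m$ with $\prod_i p_i\ll(N^+)^m\ll x^{m}$, the multiplicative version of the equidistribution/counting estimates of David--Smith and Chandee--David--Koukoulopoulos--Smith --- which, exactly as for Theorem \ref{thmm_1}, is admissible for $m\le\ell+\gamma_1$ under $A,B>x^{(\ell+\gamma_1)/2}(\log x)^{1+\ell+\gamma_2}$ and $AB>x^{3(\ell+\gamma_1)/2}(\log x)^{2+\ell+\gamma_2}$, with a power‑of‑$\log$ saving in the error --- the inner ratio equals $\prod_{i}\delta_{p_i}(N)+(\text{admissible error})$, where $\delta_p(N):=\#\{E\in\C:|E_p(\F_p)|=N\}/\#\C\ll N^{-1/2}\log N$. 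Discarding the diagonal (its total contribution is $O(m^{2}\lambda_N^{m-1}\max_p\delta_p(N))$, negligible after summing on $N$) leaves $\lambda_N^m$, where $\lambda_N=\sum_p\delta_p(N)=\tfrac{1}{\#\C}\sum_EM_E(N)$.

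\textbf{Step 3 (the moment asymptotics).} Thus $\Sigma_m(x)=\sum_{N\le x}\lambda_N^m+(\text{negligible})$, and it remains to evaluate $\sum_{N\le x}\lambda_N^m$ unconditionally. Expanding the $m$-th power, inserting the explicit class‑number/Kronecker‑symbol formula for $\delta_p(N)$, summing over $N$ first (for fixed clustered primes, a class‑number average producing a main term and a small remainder) and then over the primes by means of the unconditional Barban--Davenport--Halberstam \emph{theorem}, one obtains a main term whose emerging constant is precisely the $m$-th moment $C(m)$ of $NK(N)/\phi(N)$ from (\ref{eq_C}) (the computation of \cite{Martin_Pollack_Smith:2014,Balasubramanian_Giri:2015} raised to the $m$-th power); after partial summation to restore the factor $(\log N)^{-m}$, the logarithmic room $\gamma_2\ge1+\gamma_1$ built into the hypotheses upgrades the error to give $\Sigma_m(x)=C(m)Li_m(x)+O\big(x/(\log x)^{\ell+\gamma_1+1}\big)$ for $\ell\le m\le\ell+\gamma_1$. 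Multiplying by $d_{\ell,r}(m)$ and summing over $m$ in this range produces the asserted main term $\sum_{m=\ell}^{\ell+\gamma_1}C(m)d_{\ell,r}(m)Li_m(x)$ with an acceptable error.

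\textbf{Step 4 (the tail, and the main obstacle).} There remains $\sum_{m>\ell+\gamma_1}d_{\ell,r}(m)\Sigma_m(x)$, which by Step 1 equals $\sum_{N\le x}\tfrac{1}{\#\C}\sum_E\big(M_E(N)^{r}-P_{\ell+\gamma_1}(M_E(N))\big)\mathbf 1_{\{M_E(N)>\ell+\gamma_1\}}$, writing $P_{\ell+\gamma_1}(n):=\sum_{m=\ell}^{\ell+\gamma_1}d_{\ell,r}(m)(n)_m$; since this bracket is a polynomial of degree $\le\ell+\gamma_1$ in $M_E(N)$, the pointwise bound $|n^{r}-P_{\ell+\gamma_1}(n)|\,\mathbf 1_{\{n\ge\ell+\gamma_1+1\}}\ll_{\ell,\gamma_1,r}(n)_{\ell+\gamma_1+1}$ reduces the tail to $\ll\Sigma_{\ell+\gamma_1+1}(x)$, so one needs $\Sigma_{\ell+\gamma_1+1}(x)\ll x/(\log x)^{\ell+\gamma_1+1}$. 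This is the main obstacle: the trivial bound $M_E(N)\ll\sqrt N/\log N$ from (\ref{eq_1}) is far too lossy (a single curve can make $M_E(N)$ nearly maximal), while the $(\ell+\gamma_1+1)$-prime counting lemma is not directly available for $N$ near $x$ under the stated hypotheses on $A,B$. What must be exploited is that the constraint $M_E(N)\ge\ell+\gamma_1+1$ forces one \emph{extra} prime $p\in(N^-,N^+)$ with $|E_p(\F_p)|=N$, contributing an extra factor $\delta_p(N)\asymp1/\log N$, which is exactly the source of the additional power of $\log x$; I would realise this by applying the counting lemma of Step 2 to $\ell+\gamma_1$ of the primes while bounding the remaining prime by a Brun--Titchmarsh/large‑sieve estimate, together with $\pi(N^+)-\pi(N^-)\ll\sqrt N/\log N$. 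Making this bookkeeping uniform in the summation parameter and checking that the saving is genuinely of order $x/(\log x)^{\ell+\gamma_1+1}$ is the part of the argument that needs the most care.
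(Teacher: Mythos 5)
Your Steps 1--3 follow essentially the route of the paper. Your falling-factorial identity $n^r\mathbf{1}_{\{n\ge\ell\}}=\sum_{m\ge\ell}d_{\ell,r}(m)(n)_m$ is a cleaner derivation of the same coefficients that the paper obtains in Lemma \ref{lm_2} by inverting the triangular system (\ref{eq_6}), and your $\Sigma_m(x)$ is the paper's $\Omega(m,m)$ summed over $N\le x$, evaluated by the same multi-prime configuration count (Proposition \ref{prop_2}, Lemma \ref{lm_0.1}(a)) followed by the unconditional class-number average (Proposition \ref{prop_1}). One caveat in Step 3: the constant $C(m)$ is not obtained by ``raising the computation of \cite{Martin_Pollack_Smith:2014,Balasubramanian_Giri:2015} to the $m$-th power''; the paper must apply the mean value theorem of \cite{Balasubramanian_Giri:2015} afresh to the shifted product $F(N-1)G(N)$ of multiplicative functions built from $\left(K(N)N/\phi(N)\right)^m$, and the local computations at prime powers are what produce the expressions $f_m(p)$ in (\ref{eq_C}). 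You correctly identify $C(m)$ as the $m$-th moment, so this is a fillable sketch-level omission rather than an error.

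The genuine gap is Step 4, and your proposed fix would not work as stated. Brun--Titchmarsh controls primes in progressions, but the ``extra'' prime here carries the condition $|E_p(\F_p)|=N$, which couples $p$ to the curve $E$ being counted; you cannot detach that prime and bound it independently of the curve count. The device the paper uses is an overcounting comparison (Lemma \ref{lm_0.1}(b)): a curve with $M_E(N)=L\ge\ell+\gamma_1+1$ is counted $L^r$ times in the tail but $L!/(L-\ell-\gamma_1-1)!$ times in the $(\ell+\gamma_1+1)$-distinct-prime configuration count, and since $L^r(L-\ell-\gamma_1-1)!/L!\ll_{\ell,\gamma_1}1$ for $r\le\ell$, the tail is dominated by that configuration count, whose main term $\left(\sum_{N^-<p<N^+}H(D_N(p))/p\right)^{\ell+\gamma_1+1}\ll\left(N/(\phi(N)\log N)\right)^{\ell+\gamma_1+1}$ (by Lemma \ref{lm_1.1}(a)) sums over $N\le x$ to the admissible $O\left(x/(\log x)^{1+\ell+\gamma_1}\right)$. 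This realizes exactly the ``one extra factor of $1/\log N$'' you are after, but through the counting lemma at level $\ell+\gamma_1+1$ rather than through a sieve bound on the extra prime. Your worry that the error terms of the counting lemma at level $\ell+\gamma_1+1$ are not obviously admissible under the stated hypotheses on $A,B$ is a fair criticism of the paper itself --- Lemma \ref{lm_0.1}(b) is invoked without dwelling on this --- but the multiplicity argument just described is the idea your proposal is missing, and without it (or a working substitute) the tail estimate, and hence the theorem, is not established.
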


We note that a theorem in \cite{Chandee_David_Koukoulopoulos_Smith:2015}, stated as Proposition \ref{prop_1} later in this paper enables one to prove Theorem \ref{Th_1} unconditionally.
Further conditionally as in Theorem \ref{thmm_1}, one has 

\begin{theorem}\label{Th_2}
 Let Conjecture \ref{conj_1} be true for some $\eta<1$. Also let $\gamma_1$ be a non negative integer and $\gamma_2>0$. Now if  $A, B>x^{\frac{\ell+\gamma_1}{2}}(\log x)^{1+\ell+\gamma_2}$ and $AB>x^{\frac{3(\ell+\gamma_1)}{2}}(\log x)^{2+\ell+\gamma_2}$, then for $r\le \ell$   
\begin{align*}\frac{1}{\#\C(A,B)}\sum_{E\in \C(A,B)}\sum_{M_E(N)\ge \ell}M_E(N)^r=\sum_{m=\ell}^{\ell+\gamma_1}{d_{\ell,r}(m)}\left(\frac{K(N)N}{\phi(N)\log N}\right)^m+O\left(\frac{N}{\phi(N)\log N}\right)^{1+\ell+\gamma_1}&\\
+O\left(\frac{1}{(\log N)^{\ell+\gamma_2}}\right),&\end{align*}
where $\C(A,B)$ is as before.
\end{theorem}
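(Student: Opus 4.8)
The plan is to deduce Theorem \ref{Th_2} from the same mechanism underlying Theorem \ref{Th_1}, replacing the unconditional input of Proposition \ref{prop_1} by the conditional asymptotic of Theorem \ref{thmm_1} wherever a main term is extracted for an individual $N$. The backbone is a combinatorial identity: for every integer $M\ge 0$, Newton's forward-difference expansion of $k\mapsto k^{r}\mathbf{1}_{\{k\ge\ell\}}$ gives
\begin{align*}
 M^{r}\mathbf{1}_{\{M\ge\ell\}}=\sum_{s\ge\ell}\left(\sum_{k=\ell}^{s}(-1)^{s-k}\binom{s}{k}k^{r}\right)\binom{M}{s}=\sum_{s\ge\ell}s!\,d_{\ell,r}(s)\binom{M}{s},
\end{align*}
a sum that is finite since $\binom{M}{s}=0$ for $s>M$, and in which (by (\ref{eq_23})) the coefficient $s!\,d_{\ell,r}(s)$ is exactly the $s$-th finite difference at $0$ of $k\mapsto k^{r}\mathbf{1}_{\{k\ge\ell\}}$. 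Hence $s!\,d_{\ell,r}(s)=0$ for $s<\ell$, while for $s\ge\ell$ one has $s!\,d_{\ell,r}(s)=O_{\ell,r}(s^{\ell-1})$ (for $s>r$ the polynomial $k^{r}$ contributes nothing to the difference and only the bounded-degree contribution of the truncation at $\ell$ survives). Applying this with $M=M_{E}(N)$ and averaging over $E\in\C(A,B)$ reduces the theorem to estimating, for each $s\ge\ell$, the averaged binomial moment
\begin{align*}
 \mathcal{B}_{s}(N):=\frac{1}{\#\C(A,B)}\sum_{E\in\C(A,B)}\binom{M_{E}(N)}{s}=\frac{1}{\#\C(A,B)}\sum_{E\in\C(A,B)}\ \sum_{N^{-}<p_{1}<\cdots<p_{s}<N^{+}}\ \prod_{i=1}^{s}\mathbf{1}\bigl[\,|E_{p_{i}}(\F_{p_{i}})|=N\,\bigr].
\end{align*}

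For $s\le\ell+\gamma_{1}$ I would carry out the Chinese-Remainder-Theorem/sieve computation from the proof of Theorem \ref{Th_1}. The hypotheses $A,B>x^{(\ell+\gamma_{1})/2}(\log x)^{1+\ell+\gamma_{2}}$ and $AB>x^{3(\ell+\gamma_{1})/2}(\log x)^{2+\ell+\gamma_{2}}$ are precisely what is needed for the reductions of a generic $E\in\C(A,B)$ at any $s\le\ell+\gamma_{1}$ primes $p_{1},\dots,p_{s}$ (each of size $\asymp N$) to be equidistributed and mutually independent; the diagonal terms --- repeated primes, and the correction in passing from $\bigl(\sum_{p}\bigr)^{s}$ to $s!\sum_{p_{1}<\cdots<p_{s}}$ --- are $O_{\varepsilon}(N^{-1/2+\varepsilon})$ relative to the main term, hence negligible. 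Feeding Theorem \ref{thmm_1} into $\sum_{p}(\cdots)$ then yields, for $\ell\le s\le\ell+\gamma_{1}$,
\begin{align*}
 \mathcal{B}_{s}(N)=\frac{1}{s!}\left(\frac{K(N)N}{\phi(N)\log N}\right)^{s}+O\!\left(\left(\frac{N}{\phi(N)\log N}\right)^{s-1}\frac{1}{(\log N)^{1+\ell+\gamma_{2}}}\right)+O\bigl(N^{-1/2+\varepsilon}\bigr),
\end{align*}
with $K(N)$ as in (\ref{eq_1.5}) (the exponent $1+\ell+\gamma_{2}$ on $\log N$ being exactly what the hypothesis on $A,B$ provides). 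For $s>\ell+\gamma_{1}$ the main-term step is no longer available (both the admissible range of Conjecture \ref{conj_1} and the size of $A,B$ restrict $s$), but Proposition \ref{prop_1} still supplies the uniform bound $\mathcal{B}_{s}(N)\ll_{s}\bigl(CN/(\phi(N)\log N)\bigr)^{s}/s!$ for an absolute constant $C$ and all $s$.

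It remains to assemble the pieces by splitting $\sum_{s\ge\ell}=\sum_{\ell\le s\le\ell+\gamma_{1}}+\sum_{s>\ell+\gamma_{1}}$. In the first range the leading parts combine into
\begin{align*}
 \sum_{s=\ell}^{\ell+\gamma_{1}}s!\,d_{\ell,r}(s)\cdot\frac{1}{s!}\left(\frac{K(N)N}{\phi(N)\log N}\right)^{s}=\sum_{m=\ell}^{\ell+\gamma_{1}}d_{\ell,r}(m)\left(\frac{K(N)N}{\phi(N)\log N}\right)^{m},
\end{align*}
the claimed main term; since these finitely many $s$ obey $s!\,d_{\ell,r}(s)=O_{\ell,r}(1)$, the attendant errors sum to $O\bigl((N/\phi(N)\log N)^{\ell-1}(\log N)^{-1-\ell-\gamma_{2}}+N^{-1/2+\varepsilon}\bigr)=O\bigl((\log N)^{-\ell-\gamma_{2}}\bigr)$, using $N/(\phi(N)\log N)\ll\log\log N/\log N$ and $\ell\ge1$. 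For the tail, $s!\,d_{\ell,r}(s)=O_{\ell,r}(s^{\ell-1})$ together with Proposition \ref{prop_1} gives
\begin{align*}
 \sum_{s>\ell+\gamma_{1}}s!\,d_{\ell,r}(s)\,\mathcal{B}_{s}(N)\ll_{\ell,r}\sum_{s>\ell+\gamma_{1}}s^{\ell-1}\frac{\bigl(CN/(\phi(N)\log N)\bigr)^{s}}{s!}\ll_{\ell,r,\gamma_{1}}\left(\frac{N}{\phi(N)\log N}\right)^{1+\ell+\gamma_{1}},
\end{align*}
since $N/(\phi(N)\log N)\to 0$. Adding the two error contributions proves Theorem \ref{Th_2}. (If one reads the statement with an outer sum over $N\le x$, one finishes using $\sum_{N\le x}(N/\phi(N))^{c}\ll_{c}x$, and the further evaluation $\sum_{N\le x}\bigl(K(N)N/(\phi(N)\log N)\bigr)^{m}=C(m)Li_{m}(x)+O\bigl(x(\log x)^{-m-1}\bigr)$ --- the mean value of $\bigl(NK(N)/\phi(N)\bigr)^{m}$, as in \cite{Balasubramanian_Giri:2015,Martin_Pollack_Smith:2014} --- recovers the shape of Theorem \ref{Th_1}, which explains the relation $\gamma_{2}\ge1+\gamma_{1}$ imposed there.)

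The one genuinely substantive step is the evaluation of $\mathcal{B}_{s}(N)$ for $\ell\le s\le\ell+\gamma_{1}$ --- the higher-moment, Barban--Davenport--Halberstam type counting over the box $\C(A,B)$, where Conjecture \ref{conj_1} enters and where the size constraints on $A,B$ originate. Once that input is in hand, together with the uniform bound of Proposition \ref{prop_1} used for the tail, the combinatorial identity of the first paragraph and the error bookkeeping above are entirely formal.
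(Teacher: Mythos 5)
Your overall strategy coincides with the paper's --- reduce everything to counting $s$-tuples of primes at which $E$ has exactly $N$ points, then use the $s=1$ case together with Theorem \ref{thmm_1} to convert $\sum_{N^-<p<N^+}H(D_N(p))/p$ into $K(N)N/(\phi(N)\log N)$ --- but your combinatorial decomposition is genuinely different and cleaner. The paper expands $M_E(N)^r$ over ordered, not necessarily distinct, $r$-tuples, introduces the quantities $\omega,\Omega,\Upsilon$, brings in Stirling numbers of the second kind, and identifies the coefficients $d_{\ell,r}(j)$ only at the end by inverting a triangular linear system (Lemma \ref{lm_2}). Your Newton forward-difference identity $M^r\mathbf{1}_{\{M\ge\ell\}}=\sum_{s\ge\ell}s!\,d_{\ell,r}(s)\binom{M}{s}$ does all of this in one stroke: it is correct (the $s$-th finite difference of $k\mapsto k^r\mathbf{1}_{\{k\ge\ell\}}$ at $0$ is exactly $s!\,d_{\ell,r}(s)$ as defined in (\ref{eq_23})), and the binomial moments $\mathcal{B}_s(N)$ are precisely $\frac{1}{s!}$ times the distinct-prime-tuple counts of Lemma \ref{lm_0.1}(a), so the coefficients $d_{\ell,r}$ appear for a transparent structural reason rather than as the solution of a matrix equation. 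For $\ell\le s\le\ell+\gamma_1$ your evaluation of $\mathcal{B}_s(N)$ is exactly what Lemma \ref{lm_0.1}(a), (\ref{eq_7}) and Theorem \ref{thmm_1} deliver, and the assembly of the main term is right.

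The genuine gap is the tail $s>\ell+\gamma_1$. You invoke Proposition \ref{prop_1} for the uniform bound $\mathcal{B}_s(N)\ll \bigl(CN/(\phi(N)\log N)\bigr)^s/s!$ for all $s$, but Proposition \ref{prop_1} is an average-over-$N\le x$ statement about $\sum_p H(D_N(p))$ and gives no pointwise (in $N$) control of $\mathcal{B}_s(N)$; nor can the tuple-counting argument supply such a bound once $s>\ell+\gamma_1$, since then $p_1\cdots p_s\approx N^{s}$ exceeds the box dimensions permitted by the hypotheses on $A,B$, the putative main term $4AB/(p_1\cdots p_s)^2$ can fall below $1$, and the error terms $\mathcal{E}_s(A,B,N)$ of Proposition \ref{prop_2} are no longer dominated. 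Summing a term-by-term bound over all $s$ up to the trivial cutoff $s\le M_E(N)\ll\sqrt{N}/\log N$ is therefore unjustified as written. The repair is the paper's device: note that $\sum_{s>\ell+\gamma_1}s!\,d_{\ell,r}(s)\binom{M}{s}=M^r\mathbf{1}_{\{M\ge\ell\}}-\sum_{\ell\le s\le\ell+\gamma_1}s!\,d_{\ell,r}(s)\binom{M}{s}$ vanishes for $M\le\ell+\gamma_1$ and is $O_{\ell,\gamma_1}(M^{\ell+\gamma_1})$ for $M\ge\ell+\gamma_1+1$, so its average over $\C(A,B)$ is controlled by a \emph{single} application of the $(\ell+\gamma_1+1)$-tuple upper bound of Lemma \ref{lm_0.1}(b), which yields $O\bigl((N/(\phi(N)\log N))^{1+\ell+\gamma_1}\bigr)+O\bigl((\log N)^{-\ell-\gamma_2}\bigr)$ as claimed. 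With that substitution your argument goes through.
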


 \begin{rem}\label{rem_1}
  Recalling the fact that ${d_{\ell,r}(\ell)}=\frac{\ell^r}{\ell!}$, we note that Theorem \ref{Th_1} is somewhat similar to the prime $\ell-$tuple conjecture except for an extra $\frac{1}{\ell!}$, which comes from the permutation of a $\ell-$tuple. Also in Theorem \ref{Th_1} and Theorem \ref{Th_2}, the parameter $\gamma_1$ is introduced to express the smaller order terms with precise constants. Further in Theorem \ref{Th_2}, the implied constant in the last error term is independent of $\gamma_2$.   
 \end{rem}

In the next section we shall define the required notations and prove results that will be useful the proof of the theorems.

\section{Some results on estimation of class numbers}

Let $D$ be a negative discriminant. Using the class number formula [p. 515, \cite{Iwaniec_Kowalski:2004}], the \emph{Kronecker class number} for a discriminant $D$ can be written as 
\begin{align}
 H(D):=\sum_{\underset{D/f^2\equiv 0,1\, (\text{mod }4)}{f^2\mid D}}\frac{\sqrt{|D|}}{2\pi f}L(1, \chi_{D/f^2})\label{eq_2.1}
\end{align}
where $\chi_d$ is the Kronecker symbol $(\frac{d}{.})$ and $L(s,\chi_d):=\overset{\infty}{\underset{n=1}{\sum}}\frac{\chi_d(n)}{n^s}$.\par
Also let \begin{align}
                  D_N(p)&:=(p+1-N)^2-4p= (N+1-p)^2-4N,\label{eq_2.2}
                   \end{align}
                  Now, for $f^2\mid D_N(p)$,\text{ for } $d_{N,f}(p):=\frac{D_N(p)}{f^2}$.\\
                  Further, using Deuring's theorem\cite{Deuring:1941} we get 
   \begin{align}
   H(D_N(p))&=\sum_{\underset{|\tilde{E}(\F_p)|=N}{\tilde{E}/\F_p}}\frac{1}{\#\text{Aut}(\tilde{E})},
   \end{align}
                
where the sum is over the $F_p$-isomorphism classes of elliptic curves.\par

With these notations, we have the proposition as follows.
\begin{prop}\label{prop_1}
 Fix $R>0$, for $x\ge 1$, we have that 
 $$\frac{1}{x}\sum_{1\le N\le x}|\sum_{N^-<p<N^+}H(D_N(p))-\frac{K(N)N^2}{\phi(N)\log N}|\ll_R \frac{x}{(\log x)^R}.$$
\end{prop}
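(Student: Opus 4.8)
In essence this is the Barban--Davenport--Halberstam estimate for Kronecker class numbers proved in \cite{Chandee_David_Koukoulopoulos_Smith:2015}, so I only outline the mechanism. The plan is to unfold $H(D_N(p))$ into short character sums over the primes $p\in(N^{-},N^{+})$, and then to use the extra average over $N\le x$ so that the required equidistribution of primes follows from the \emph{unconditional} Barban--Davenport--Halberstam theorem rather than from the short-interval Conjecture \ref{conj_1}; the point is that the moduli which eventually appear are only of size $\asymp x^{1/2}(\log x)^{O(1)}$, well inside the admissible range.

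\emph{Step 1 (reduction to character sums).} By Deuring's theorem and the class number formula \eqref{eq_2.1},
\[ \sum_{N^{-}<p<N^{+}}H(D_N(p))=\sum_{N^{-}<p<N^{+}}\ \sum_{\substack{f^{2}\mid D_N(p)\\ d_{N,f}(p)\equiv 0,1\ (\mathrm{mod}\ 4)}}\frac{\sqrt{|D_N(p)|}}{2\pi f}\,L\big(1,\chi_{d_{N,f}(p)}\big). \]
Since $|D_N(p)|=4N-(N+1-p)^{2}\le 4N$, since $L(1,\chi_d)\ll\log|d|$, and since $D_N(p)$ is divisible by a large square for only few pairs $(N,p)$, the terms with $f>(\log N)^{C(R)}$ contribute $O\big(x^{2}(\log x)^{-R}\big)$ after summation over $N\le x$; so only $O((\log N)^{C})$ values of $f$ matter. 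Expanding $L(1,\chi_{d_{N,f}(p)})=\sum_{n\ge1}\chi_{d_{N,f}(p)}(n)/n$ and truncating at $n\le z$, the tail is (by P\'olya--Vinogradov and partial summation) $\ll\sqrt{N}(\log N)/z$ per term, so for a suitable $z=x^{1/2}(\log x)^{O_R(1)}$ the truncation is harmless for the whole sum. This reduces matters to evaluating, on average over $N$,
\[ \sum_{f\le(\log N)^{C}}\frac{1}{2\pi f}\sum_{n\le z}\frac{1}{n}\sum_{N^{-}<p<N^{+}}\sqrt{|D_N(p)|}\,\Big(\tfrac{d_{N,f}(p)}{n}\Big). \]

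\emph{Step 2 (applying Barban--Davenport--Halberstam).} After removing the smooth weight $\sqrt{|D_N(p)|}$ and inserting a factor $\log p$ by partial summation in $p$, the key observation is that $\big(\tfrac{d_{N,f}(p)}{n}\big)$ is, up to a bounded factor coming from $f$ and from $2$, the Kronecker symbol $\big(\tfrac{(N+1-p)^{2}-4N}{n}\big)$; as a function of $p$ this depends only on $p$ modulo some $q\mid 8fn$, where $q$ and the residue-class signs depend on $N$ but $q\ll x^{1/2}(\log x)^{O(1)}$ uniformly in $N$. Sorting $p$ into residue classes mod $q$ and writing $\theta(N^{+};q,a)-\theta(N^{-};q,a)=\frac{N^{+}-N^{-}}{\phi(q)}+\mathcal E_{N,q,a}$, the ``main'' parts assemble --- via the standard local density computation at each prime, in which the primes dividing $N$, the prime $2$, and all other primes are handled separately, producing exactly the two products in \eqref{eq_1.5} --- into the asserted main term $\dfrac{K(N)N^{2}}{\phi(N)\log N}$, with an error $O\big(x^{2}(\log x)^{-R}\big)$. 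The contribution of the $\mathcal E_{N,q,a}$ is where Barban--Davenport--Halberstam enters: because we also sum over $N\le x$, the endpoints $N^{\pm}$ sweep out $[0,O(x)]$ while the moduli remain $\ll x^{1/2}(\log x)^{O(1)}\le x(\log x)^{-B}$, so --- after Cauchy--Schwarz, a dyadic splitting in $N$ (on $N\asymp M$ the interval length $N^{+}-N^{-}\asymp\sqrt M$ is essentially constant), and exploiting the $1/n$-weighting to keep the sum over moduli under control --- the unconditional theorem applies in place of Conjecture \ref{conj_1} and yields $O\big(x^{2}(\log x)^{-R}\big)$. Dividing by $x$ finishes the proof.

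\emph{Main obstacle.} The conceptual content lies entirely in Step 2; the real difficulty is technical. First comes the error analysis: a naive Cauchy--Schwarz loses logarithmic factors, so one must use the $1/n$-weighting together with a sufficiently sharp (summed) form of Barban--Davenport--Halberstam to drive the error below $(\log x)^{-R}$ for every $R$. Second is the local density computation: one must check that the two factors of $K(N)$ --- in particular the exceptional behaviour at $2$ and at the primes dividing $N$ --- come out with precisely the right exponents, and that the modulus $q$ genuinely stays of size $x^{1/2}(\log x)^{O(1)}$, its dependence on $N$ being only through $\gcd$-type factors. Both are carried out in detail in \cite{Chandee_David_Koukoulopoulos_Smith:2015}, from which Proposition \ref{prop_1} is extracted.
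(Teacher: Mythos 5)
The paper does not actually prove Proposition \ref{prop_1}: it simply cites it as Theorem 1.8 of \cite{Chandee_David_Koukoulopoulos_Smith:2015}. Your sketch is a faithful outline of how that cited theorem is proved there --- in particular you correctly identify the key point that the extra average over $N\le x$ keeps the moduli in the range where the \emph{unconditional} Barban--Davenport--Halberstam theorem applies, which is exactly why this statement (unlike Theorem \ref{thmm_1}) needs no hypothesis --- and you defer the technical details to the same reference the paper relies on, so there is nothing to fault.
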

The above proposition has been proved in (Theorem 1.8, \cite{Chandee_David_Koukoulopoulos_Smith:2015}).

Note that, in our case the $p\approx N$ and $|D_N(p)|\le 4N$. With these notations, we state the following lemma

\begin{lemma}\label{lm_1.1}
Let $N$ be a positive integers and $N^-$ and $N^+$ are defined as in (\ref{eq_2}). Also  
 let $H(D_N(p))$ be defined using (\ref{eq_2.1}) and (\ref{eq_2.2}). Then  
\begin{enumerate}[(a)]
\item $$\underset{N^-<p<N^+}{\sum}{H(D_N(p))} \ll  \frac{N^2}{\phi(N)\log N}.$$
 \item Also for $k\ge 2$, $$\underset{N^-<p<N^+}{\sum}{H(D_N(p))^k} \ll  N^{\frac{k+1}{2}}(\log N)^{k-2}(\log \log N)^k.$$
\end{enumerate}
\end{lemma}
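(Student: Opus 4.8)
The plan is to derive (a) from the unconditional estimate (\ref{eq_09}) and then to deduce (b) from (a) together with a pointwise bound for a single Kronecker class number. For (a), I would fix $N$ and take $A=B$ to be a fixed large power of $N$, say $A=B=N^{10}$, which lies well above any threshold required for (\ref{eq_09}). The key observation is that for a prime $p\in(N^-,N^+)$ a curve $E_{a,b}\in\C(A,B)$ has good reduction at $p$ with $|E_{a,b}(\F_p)|=N$ exactly when its reduction $(\bar a,\bar b)\in\F_p^2$ satisfies $\Delta(E_{\bar a,\bar b})\neq0$ and $|E_{\bar a,\bar b}(\F_p)|=N$. Counting the integers of $[-A,A]$ (resp.\ $[-B,B]$) in a fixed residue class modulo $p$ as $2A/p+O(1)$ (resp.\ $2B/p+O(1)$), and using the trivial bound $\#\{(\bar a,\bar b)\in\F_p^2\colon\Delta\neq0,\ |E_{\bar a,\bar b}(\F_p)|=N\}\ll pH(D_N(p))$, one gets
$$\#\{E\in\C(A,B)\colon p\text{ good},\ |E_p(\F_p)|=N\}=\frac{4AB}{p^2}\,\#\{(\bar a,\bar b)\in\F_p^2\colon\Delta\neq0,\ |E_{\bar a,\bar b}(\F_p)|=N\}+O\big((A+B+p)H(D_N(p))\big).$$
Since short Weierstrass models $y^2=x^3+ax+b$ defining isomorphic curves over $\F_p$ form orbits of size $(p-1)/\#\mathrm{Aut}$ under $u\cdot(a,b)=(u^4a,u^6b)$ — in every case, including $j=0,1728$ — this combined with Deuring's theorem gives $\#\{(\bar a,\bar b)\in\F_p^2\colon\Delta\neq0,\ |E_{\bar a,\bar b}(\F_p)|=N\}\geq(p-1)H(D_N(p))$. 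Summing over $p\in(N^-,N^+)$, dividing by $\#\C(A,B)=4AB\bigl(1+O(N^{-10})\bigr)$ and using $p\asymp N$, all error terms are $O(N^{-9})$ relative to the main term thanks to the normalising factor $AB$, so that
$$\frac{1}{\#\C(A,B)}\sum_{E\in\C(A,B)}M_E(N)\ \gg\ \frac{1}{N}\sum_{N^-<p<N^+}H(D_N(p)),$$
and (\ref{eq_09}) then yields (a). (The same estimate is in any case implicit in the sieve argument that proves Proposition \ref{prop_1}.)

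For (b) I would first record the single-discriminant bound: from the class number formula (\ref{eq_2.1}), the unconditional estimate $L(1,\chi_d)\ll\log(|d|+2)$, and $\sum_{f^2\mid D}\frac{1}{f}\leq\prod_{p^2\mid D}\frac{p}{p-1}=\frac{m}{\phi(m)}\ll\log\log(|D|+3)$ with $m=\prod_{p^2\mid D}p\leq\sqrt{|D|}$, one obtains $H(D)\ll\sqrt{|D|}\,\log|D|\,\log\log|D|$; since $|D_N(p)|\leq4N$ on $(N^-,N^+)$ this gives $\max_{N^-<p<N^+}H(D_N(p))\ll\sqrt N\,\log N\,\log\log N$. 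Then for $k\geq2$ I would use $\sum H^k\leq(\max H)^{k-1}\sum H$ together with part (a):
$$\sum_{N^-<p<N^+}H(D_N(p))^k\ \ll\ \bigl(\sqrt N\,\log N\,\log\log N\bigr)^{k-1}\cdot\frac{N^2}{\phi(N)\log N}\ \ll\ N^{\frac{k+1}{2}}(\log N)^{k-2}(\log\log N)^{k},$$
where the last step uses $N/\phi(N)\ll\log\log N$ (the finitely many small $N$ absorbed into the implied constant, which may depend on $k$). This is exactly (b).

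The main obstacle is the dictionary used in (a): one has to pin down the identity $\#\{(\bar a,\bar b)\in\F_p^2\colon\Delta\neq0,\ |E_{\bar a,\bar b}(\F_p)|=N\}=(p-1)H(D_N(p))$ — in particular the automorphism bookkeeping at $j=0$ and $j=1728$, and the fact that for the relevant (large) $N$ one has $p\geq5$ — and to verify that the lattice-point, boundary $\Delta=0$ and bad-reduction contributions are all negligible after division by $\#\C(A,B)$. Once (a) is in hand, (b) is a one-line estimate, so this first step is where all the work sits.
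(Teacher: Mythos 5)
Your argument is correct and matches the paper's: part (a) is obtained by reducing to the unconditional average bound of Chandee--David--Koukoulopoulos--Smith (the paper simply cites their Theorem 1.7, i.e.\ (\ref{eq_09}), where you unpack the Deuring/orbit-counting dictionary explicitly), and part (b) is proved exactly as in the paper via the pointwise bound $H(D_N(p))\ll\sqrt{N}\log N\log\log N$ combined with $\sum H^k\le(\max H)^{k-1}\sum H$ and part (a). No gaps.
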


 \begin{proof}
 Part (a) essentially follows from [Theorem 1.7, \cite{Chandee_David_Koukoulopoulos_Smith:2015}]. Also see \cite{David_Smith:2013}.\par
 For part (b), We recall that
 
\begin{align}
 {H(D_N(p))}&= \sum_{\underset{\frac{D_N(p)}{f^2}\equiv 0,1 (\text{ mod }4)}{f^2\mid D_N(p)}}\frac{\sqrt{|D_N(p)|}}{2\pi f}L(1,\chi_{d_{N,f}(p)}).\nonumber
 \end{align}

Now $|D_N(p)|\le 4N$ in the above range of $p$. Also  $L(1,\chi_{d_{N,f}(p)})\ll \log N$ using convexity bound. Further using the fact that $\underset{d\mid n}{\sum}\frac{1}{d}\ll \log \log n$, we get 
 \begin{align}
 H(D_N(p))&\ll  \sum_{\underset{\frac{D_N(p)}{f^2}\equiv 0,1 (\text{ mod }4)}{f^2\mid D_N(p)}}\frac{\sqrt{N}\log N}{f} \nonumber\\
 &\ll \sqrt{N}\log N \log\log N.\label{eq_13}
 \end{align}
Then, (\ref{eq_13}) along with part (a) completes the proof.
\end{proof}
Probably a stronger bound for the second part of the previous lemma could be proved in Lemma \ref{lm_1.1}(b). But for the purpose of this paper, this result is sufficient. \par
Now, we recall the following lemma [Corollary 2F, \cite{Schimidt:1976}]:
\begin{lemma}\label{lm_1.2}
Suppose $p$ is a prime. Suppose $g(x)=a_nx^n+\cdots+a_0$ is a polynomial with integer coefficients having $0<n<p$ and $p\nmid a_n$. Then 
$$|\sum_{x=0}^{p-1}e(\frac{g(x)}{p})|\le (n-1)p^{\frac{1}{2}}.$$
\end{lemma}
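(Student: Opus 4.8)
Since Lemma~\ref{lm_1.2} is the classical Weil bound for one-variable exponential sums and is quoted from \cite{Schimidt:1976}, here is how one would prove it directly. The plan is to deduce it from the Riemann hypothesis for curves over finite fields. First I would attach to $g$ the Artin--Schreier curve $C\colon y^{p}-y=g(x)$ over $\F_{p}$. The partial derivative of $y^{p}-y-g(x)$ in $y$ is $-1$, so the affine curve is smooth; and since $\deg g=n$ with $0<n<p$, the pole of $g$ at $x=\infty$ has order $n$ coprime to $p$, which forces $g$ not to be of the form $h^{p}-h+c$, so $C$ is geometrically irreducible and already in standard form. A Riemann--Hurwitz computation at the single, totally (and wildly) ramified place above $x=\infty$ then shows that the smooth projective model $\overline C$ has genus $(p-1)(n-1)/2$ and exactly one rational point at infinity.

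Next I would translate the sum into point counts. Writing $\psi$ for the additive character $u\mapsto e(u/p)$ of $\Z/p\Z$ and, for each $k\ge 1$ and $t\in\F_{p}$, setting $S_{k}(t)=\sum_{x\in\F_{p^{k}}}\psi\big(t\,\mathrm{Tr}_{\F_{p^{k}}/\F_{p}}(g(x))\big)$, the quantity to be bounded is $S=S_{1}(1)$. Detecting the equation $y^{p}-y=g(x)$ by summing $\psi$ over the fibres gives $\#C^{\mathrm{aff}}(\F_{p^{k}})=\sum_{t\in\F_{p}}S_{k}(t)=p^{k}+\sum_{t\neq 0}S_{k}(t)$, whereas the zeta function of $\overline C$ gives $\#C^{\mathrm{aff}}(\F_{p^{k}})=p^{k}-\sum_{i=1}^{(p-1)(n-1)}\beta_{i}^{k}$ for the Frobenius eigenvalues $\beta_{i}$. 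The translations $y\mapsto y+c$ ($c\in\F_{p}$) act on $C$, and decomposing the relevant cohomology (equivalently, factoring the zeta function) along the characters of this $\Z/p\Z$-action splits it into $p-1$ pieces of equal dimension $n-1$; on the piece attached to a fixed nontrivial $t$ the Frobenius eigenvalues $\alpha_{t,1},\dots,\alpha_{t,n-1}$ satisfy $S_{k}(t)=-\sum_{j=1}^{n-1}\alpha_{t,j}^{k}$, and in particular $S=S_1(1)=-\sum_{j=1}^{n-1}\alpha_{1,j}$.

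Finally I would invoke the Riemann hypothesis for curves, which yields $|\alpha_{t,j}|=p^{1/2}$ for all $t\neq 0$ and all $j$; specializing to $k=1$, $t=1$ then gives
\[
\Big|\sum_{x=0}^{p-1}e\!\Big(\frac{g(x)}{p}\Big)\Big|=\Big|\sum_{j=1}^{n-1}\alpha_{1,j}\Big|\le (n-1)\,p^{1/2},
\]
which is the assertion. The one non-elementary ingredient — and the step I expect to be the main obstacle — is precisely this purity statement for the $\beta_{i}$ (equivalently, for the $\alpha_{t,j}$). This is exactly why \cite{Schimidt:1976} instead obtains the bound by Stepanov's elementary polynomial method, which estimates the number of $\F_{p^{k}}$-points on curves such as $y^{p}-y=g(x)$ directly: one constructs an auxiliary polynomial of controlled degree vanishing to high order along a hypothetical excess of points, and plays the resulting upper and lower bounds on its degree against each other. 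Either route produces the stated inequality, which is all that is used in the sequel.
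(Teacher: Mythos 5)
Your outline is mathematically sound, but note that the paper itself offers no proof of this lemma at all: it is simply quoted as Corollary 2F of \cite{Schimidt:1976}, so the relevant comparison is with Schmidt's argument rather than with anything in this paper. What you sketch is the classical cohomological route: attach the Artin--Schreier cover $y^p-y=g(x)$, check geometric irreducibility (your observation that the pole order $n$ at infinity is prime to $p$, so $g\neq h^p-h+c$, is exactly the right point, and it is also where the hypotheses $0<n<p$, $p\nmid a_n$ enter), compute the genus $(p-1)(n-1)/2$, convert the character sum into the point count $\#C^{\mathrm{aff}}(\F_{p^k})=p^k+\sum_{t\neq0}S_k(t)$, split the zeta function along the $\Z/p\Z$-action into $L$-factors of degree $n-1$ for each nontrivial $t$, and finish with purity $|\alpha_{t,j}|=p^{1/2}$. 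Each of these steps is correct and standard, and you are candid that the purity statement is the Riemann hypothesis for curves, i.e.\ a deep theorem taken as input; with that admission the proof is complete but not self-contained. Schmidt's cited proof buys precisely what yours does not: it is elementary, estimating the number of points on the same Artin--Schreier curves directly by Stepanov's auxiliary-polynomial method, and then recovering the eigenvalue bound (and hence the displayed inequality) without any appeal to Weil's theorem. For the purposes of this paper either route is acceptable, since only the inequality itself is used (in bounding the complete character sums appearing in the proof of Proposition \ref{prop_2}); just be aware that ``invoke RH for curves'' is a substantially heavier hammer than the elementary source the authors point to.
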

\section{Curves with fixed order modulo primes}

From now on $E_{s,t}$ will denote the elliptic curve given by a Weierstrass equation of the form $y^2=x^3+sx+t$. Also if the corresponding field is of characteristic different from $2$ or $3$, then any isomorphism class of curve can be represented by one such Weierstrass equation. With these notation we state the following result   
\begin{prop}\label{prop_2}
 Let $\{p_i\}_{i=1}^\ell$ be a set of $\ell$ distinct primes in the range $[N^-,N^+]$ and $\{\tilde{E}_{s_i,t_i}/\F_{p_i}\}_{i=1}^{\ell}$ be a set of isomorphism class of elliptic curves over corresponding fields $\F_{p_i}$'s. Then for the class of rational curves $\C(A,B)$ as defined in (\ref{eq_1.6}), 
\begin{align}
 \#\{{E\in \C(A,B)}:\, E_{p_i}\cong_{p_i} \tilde{E}_{s_i,t_i} \text{ for }1\le i\le \ell \}&=\frac{4AB}{p_1\cdots p_\ell}\prod_{i=1}^\ell\left( \frac{1}{|Aut_{p_i}(E_{s_i,t_i})|}\right)+\mathcal{E}_\ell(A,B,N)
\end{align}
where \begin{align*}\mathcal{E}_\ell(A,B,N)\ll \frac{AB}{N^{2\ell}}+N^{\frac{\ell}{2}}(\log N)^2+(A\prod_{t_i=0}\sqrt{N}+B\prod_{s_i=0}\sqrt{N})N^{-\frac{\ell}{2}}\log N.\end{align*}
 \end{prop}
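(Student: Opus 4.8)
The strategy is to parametrize the curves in $\C(A,B)$ congruent to the prescribed $\tilde E_{s_i,t_i}$ modulo each $p_i$ by a character-sum / lattice-point count, and then to evaluate this by the Chinese Remainder Theorem together with Weil-type bounds. For a single prime $p$, the curve $E_{a,b}$ reduces to a Weierstrass equation isomorphic to $\tilde E_{s,t}$ over $\F_p$ precisely when there is $u\in\F_p^\times$ with $a\equiv u^4 s$, $b\equiv u^6 t\pmod p$; the number of such pairs $(a\bmod p,b\bmod p)$ is $(p-1)/|\mathrm{Aut}_p(E_{s,t})|$ in the generic case, with the exceptional behaviour occurring exactly when $s=0$ (extra automorphisms, $j=0$) or $t=0$ ($j=1728$), which is why those cases appear separately in the error term. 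So I would first write
\[
\#\{E\in\C(A,B):E_{p_i}\cong_{p_i}\tilde E_{s_i,t_i},\ 1\le i\le\ell\}
=\sum_{\substack{|a|\le A,\ |b|\le B\\ \Delta(E_{a,b})\ne0}}\ \prod_{i=1}^{\ell}\mathbf 1\big[(a,b)\in S_{p_i}\big],
\]
where $S_{p_i}\subseteq(\Z/p_i\Z)^2$ is the set of admissible residues, of size $n_{p_i}:=(p_i-1)/|\mathrm{Aut}_{p_i}(E_{s_i,t_i})|$ (with the caveat above when $s_i$ or $t_i$ vanishes).

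Next I would detect the congruence conditions with additive characters: for each modulus $m=p_1\cdots p_\ell$ (using CRT so that $\prod_i\mathbf 1[(a,b)\in S_{p_i}]$ becomes $\mathbf 1[(a,b)\in S]$ for a set $S\subseteq(\Z/m\Z)^2$ of size $\prod_i n_{p_i}$), expand
\[
\mathbf 1[(a,b)\in S]=\frac1{m^2}\sum_{u,v\bmod m}\ \Big(\sum_{(\alpha,\beta)\in S}e\Big(\frac{u\alpha+v\beta}{m}\Big)\Big)e\Big(\frac{-ua-vb}{m}\Big).
\]
The $(u,v)=(0,0)$ term gives the main term $\dfrac{|S|}{m^2}\cdot(\text{number of lattice points})=\dfrac{|S|}{m^2}(2A+O(1))(2B+O(1))=\dfrac{4AB}{p_1\cdots p_\ell}\prod_i\dfrac1{|\mathrm{Aut}_{p_i}(E_{s_i,t_i})|}+O(AB/N^{2\ell})$, after using $p_i\asymp N$ and discarding the $O(1)$'s (this explains the first error term). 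The sums over $a$ and $b$ of $e(-ua/m),e(-vb/m)$ are geometric and contribute $\ll \min(A,\lVert u/m\rVert^{-1})$ and $\ll\min(B,\lVert v/m\rVert^{-1})$; the key input is that the character sum $\widehat{\mathbf 1_S}(u,v)=\sum_{(\alpha,\beta)\in S}e((u\alpha+v\beta)/m)$ factors over the primes $p_i$ by CRT, and on each factor it is a sum over $\alpha\in\F_{p_i}$ of $e$ of a polynomial in $\alpha$ of bounded degree (coming from $b\equiv u^6 t$ as $u$ ranges, or the like), so Lemma \ref{lm_1.2} gives a gain of $p_i^{1/2}$ whenever the relevant coefficient is nonzero mod $p_i$. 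I would split the $(u,v)\ne(0,0)$ sum according to which $p_i$ divide $u$ and which divide $v$; the terms where some $p_i\mid u$ and the same or another $p_j\mid v$ feed into the degenerate $s_i=0$ or $t_i=0$ contributions $A\prod_{t_i=0}\sqrt N$ and $B\prod_{s_i=0}\sqrt N$, while the fully "nondegenerate" terms give $\ll N^{\ell/2}(\log N)^2$ after summing the two geometric factors (each producing a $\log$) and using square-root cancellation on the remaining $\ell$ prime moduli to beat the $m^2$ in the denominator.

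The main obstacle — and the part requiring genuine care rather than bookkeeping — is the uniform treatment of the exceptional residues $s_i=0$ ($j=1728$) and $t_i=0$ ($j=0$), where the set $S_{p_i}$ is not "generic": there the relevant polynomial in $\alpha$ can be identically zero modulo $p_i$ (e.g. when the twisting exponent kills the variable), so Lemma \ref{lm_1.2} gives no cancellation and one simply bounds that factor trivially by $p_i$, which is exactly what produces the extra $\sqrt N$ factors $A\prod_{t_i=0}\sqrt N$ and $B\prod_{s_i=0}\sqrt N$ (and correspondingly the automorphism group there has size $4$ or $6$ instead of $2$). Keeping track of precisely which of the $\ell$ primes are "good" and which are exceptional, and organizing the sum over $(u,v)$ so that every combination lands in one of the three displayed error terms, is the bulk of the work; the individual estimates are routine applications of geometric-series bounds, $\sum_{1\le u<m}\lVert u/m\rVert^{-1}\ll m\log m$, and Lemma \ref{lm_1.2}. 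Finally, the condition $\Delta(E_{a,b})\ne0$ removes only $O(AB/N + \sqrt{AB})$ or so pairs — in any case a negligible set absorbed into the stated errors — so it may be imposed at the end.
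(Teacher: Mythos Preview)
Your proposal is correct and follows essentially the same approach as the paper's proof (which explicitly credits the Fouvry--Murty character-sum argument): parametrize the isomorphism-class residues as $(u^4 s_i,u^6 t_i)$ for $u\in\F_{p_i}^\times$, combine by CRT, detect by additive characters, bound the nontrivial sums by Lemma~\ref{lm_1.2}, and track the loss of square-root cancellation at the exceptional primes with $s_i=0$ or $t_i=0$ exactly as you outline. The only cosmetic difference is that the paper first cuts $[-A,A]\times[-B,B]$ into complete $(p_1\cdots p_\ell)$-periods plus a leftover box $\A\times\B$ and applies the character-sum machinery only to the cross terms and the remainder, whereas you run the Fourier expansion over the full box directly; the two organizations are equivalent and lead to the same error terms.
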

\begin{proof}
 
 We use a modified version of the character sum argument used by Fouvry and Murty (p. 94, \cite{Fouvry_Murty:1996}).
 First subdivide the interval $[-A,A]$ into intervals of length $p_1\cdots p_\ell$, starting from $[-A,-A+p_1p_2\dots p_\ell]$. The last one is denoted by $\A$. Similarly for $[-B,B]$, with the last one as $\B$.
Using the Chinese remainder theorem, we get 

\begin{align}
\#&\{{E\in \C(A,B)}:\, E\cong_{p_i} \tilde{E}_{s_i,t_i} \text{ for }1\le i\le \ell \}\nonumber \\ 
&=\left[\frac{2A}{p_1\cdots p_\ell}\right]\left[\frac{2B}{p_1\cdots p_\ell}\right]\prod_{i=1}^\ell\left( \frac{p_i-1}{|Aut_{p_i}(E_{s_i,t_i})}|\right)\nonumber\\
&+\left[\frac{2A}{p_1\cdots p_\ell}\right]\prod_{i=1}^\ell\frac{1}{|Aut_{p_i}(E_{s_i,t_i})|} \# \left\{(u_1,\cdots u_\ell)\in \F_{p_1}\times \cdots \times\F_{p_\ell}:   t_iu_i^6\in \B  (\text{mod } p_i) \forall 1\le i\le \ell \right\}\nonumber\\
&+\left[\frac{2B}{p_1\cdots p_\ell}\right]\prod_{i=1}^\ell\frac{1}{|Aut_{p_i}(E_{s_i,t_i})|} \# \left\{(u_1,\cdots u_\ell)\in \F_{p_1}\times \cdots \times\F_{p_\ell}:   s_iu_i^4\in \A  (\text{mod } p_i) \forall 1\le i\le \ell \right\}\nonumber\\
&+ \prod_{i=1}^\ell\frac{1}{|Aut_{p_i}(E_{s_i,t_i})|}\# \left\{(u_1,\cdots, u_\ell)\in \F_{p_1}\times \cdots \times\F_{p_\ell}:   s_iu_i^4\in \A  (\text{mod } p_i),\, t_iu_i^6\in \B  (\text{mod } p_i)   \forall 1\le i\le \ell \right\}\nonumber\\
&+O(\frac{AB}{p_1\cdots p_\ell}(\sum_{i=1}^\ell\frac{1}{p_i^9})),\label{eq_2.4}
\end{align}
where the last error term comes from the rational curves of the form $E_{s_iu_i^4p_i^4, t_iu_i^6p_i^6}$.

Now from the fourth term on the right hand side of (\ref{eq_2.4}), 
\begin{align}
&\# \left\{(u_1,\cdots u_\ell)\in \F_{p_1}\times \cdots \times\F_{p_\ell}:   s_iu_i^4\in \A  (\text{mod } p_i),\, t_iu_i^6\in \B  (\text{mod } p_i) \,  \forall 1\le i\le \ell \right\}\nonumber\\
&\quad \quad =\frac{1}{(p_1\cdots p_\ell)^2}\sum_{\underset{0\le h_i\le p_i}{(h_1,\cdots,h_\ell)}}\sum_{\underset{0\le g_i\le p_i}{(g_1,\cdots, g_\ell)}}\sum_{\underset{1\le u_i\le p_i-1}{(u_1,\cdots, u_\ell)}}\sum_{(a,b)\in \A\times \B}e\left(\sum_{i=1}^\ell\frac{h_i(s_iu_i^4-a)+g_i(t_iu_i^6-b)}{p_i}\right),\label{eq_3.1}
\end{align}

where $e(x)=e^{2\pi i x}$. \par 
When $(h_1,\cdots,h_\ell)=(0,\cdots,0)$ and $(g_1,\cdots,g_\ell)=(0,\cdots,0)$, the R.H.S of (\ref{eq_3.1}) gives a contribution equal to $|\A||\B|\overset{\ell}{\underset{i=1}{\prod}}(\frac{p_i-1}{p_i^2})$. 
Using the fact that $\A$ and $\B$ are intervals, the contributions corresponding to $(h_1,\cdots,h_\ell)\neq(0,\cdots,0)$, $(g_1,\cdots,g_\ell)\neq(0,\cdots,0)$ is bounded by 
\begin{align}\frac{1}{(p_1\cdots p_\ell)^2}\sum_{{\underset{0\le h_i\le p_i-i}{(h_1,\cdots, h_\ell)\neq (0,\cdots, 0)}}}\sum_{\underset{0\le g_i\le p_i-1}{(g_1,\cdots, g_\ell)\neq (0,\cdots, 0)}}& \left\|\frac{h_1}{p_1}+\cdots +\frac{h_\ell}{p_\ell}\right\|^{-1}\left\| \frac{g_1}{p_1}+\cdots +\frac{g_\ell}{p_\ell}\right \|^{-1}\nonumber \\
&\times \prod_{i=1}^\ell\left(\sum_{u_i=1}^{p_i-1}e\left(\frac{h_is_iu_i^4+g_it_iu_i^6}{p_i}\right)\right).\label{eq_3.2}\end{align}

If $h_ig_i$ is different from $0$ for all $i$, then $\overset{{p_i-1}}{\underset{u_i=1}{\sum}}e\left(\frac{h_is_iu_i^4+g_it_iu_i^6}{p_i}\right) \le 5\sqrt p_i$, using Lemma \ref{lm_1.2}. While if $h_{i_1}\, , h_{i_2}\,, \cdots, h_{i_r}$ are zero and other $h_i$ are non zero, then 
$$\frac{1}{(p_1\cdots p_\ell)}\sum_{\overset{\underset{0\le h_i\le p_i-i}{(h_1,\cdots, h_\ell)\neq (0,\cdots, 0)}}{h_{i_1}=h_{i_2}=\cdots=h_{i_r}=0}}\left\|\frac{h_1}{p_1}+\cdots +\frac{h_\ell}{p_\ell}\right\|^{-1}=O\left(\frac{\log \left(\frac{p_1\cdots p_\ell}{p_{i_1}\cdots p_{i_r}}\right)}{p_{i_1}\cdots p_{i_r}}\right).$$
Similar result holds for $g_i$'s.

Without loss of generality we may assume that $p_i\gg 2^{2\ell}$. In that case (\ref{eq_3.2}) is $$O(\sqrt{p_1\cdots p_\ell}\log(p_1\cdots p_\ell)^2).$$
Similarly, considering contributions corresponding to $(h_1,\cdots,h_\ell)=(0,\cdots,0)$, $(g_1,\cdots,g_\ell)\neq (0,\cdots,0)$, as well as $(h_1,\cdots,h_\ell)\neq (0,\cdots,0)$, $(g_1,\cdots,g_\ell)= (0,\cdots,0)$, (\ref{eq_3.1}) equals to

\begin{align}\label{eq_3.3}
|\A||\B|\prod_{i=1}^\ell&(\frac{p_i-1}{p_i^2})+O(\frac{|\A|}{(p_1\cdots p_\ell)}\log (p_1\cdots p_\ell)\prod_{t_i=0}(p_i)\prod_{t_i\neq 0}\sqrt(p_i)) \nonumber\\
&+O(\frac{|\B|}{(p_1\cdots p_\ell)}\log (p_1\cdots p_\ell)\prod_{s_i=0}(p_i)\prod_{s_i\neq 0}\sqrt(p_i))+O(\sqrt{p_1\cdots p_\ell}\log(p_1\cdots p_\ell)^2) 
\end{align}

Proceeding in a similar way for the second and third term in the right hand side of (\ref{eq_2.4}), we get the following

\begin{align}
 \#\{&{E\in \C(A,B)}:\, E\cong_{p_i} \tilde{E}_{s_i,t_i} \text{ for }1\le i\le \ell \} =\left[\frac{2A}{p_1\cdots p_\ell}\right]\left[\frac{2B}{p_1\cdots p_\ell}\right]\prod_{i=1}^\ell\left( \frac{p-1}{|Aut_{p_i}(E_{s_i,t_i})}\right)\nonumber\\
 &  +\left[\frac{2A}{p_1\cdots p_\ell}\right]\prod_{i=1}^\ell\frac{1}{|Aut_{p_i}(E_{s_i,t_i})|}\left[|\B|\prod_{i=1}^\ell\frac{p_i-1}{p_i}+O\left(\left(\prod_{s_i=0}p_i\right)\left(\prod_{s_i\neq0}\sqrt{p_i}\right)\log (p_1\cdots p_\ell)\right)\right]\nonumber\\
 &  +\left[\frac{2B}{p_1\cdots p_\ell}\right]\prod_{i=1}^\ell\frac{1}{|Aut_{p_i}(E_{s_i,t_i})|}\left[|\A|\prod_{i=1}^\ell\frac{p_i-1}{p_i}+O\left(\left(\prod_{t_i=0}p_i\right)\left(\prod_{t_i\neq0}\sqrt{p_i}\right)\log (p_1\cdots p_\ell)\right)\right]\nonumber\\
&  +|\A||\B|\prod_{i=1}^\ell(\frac{p_i-1}{p_i^2})+O(\frac{|\A|}{(p_1\cdots p_\ell)}\log (p_1\cdots p_\ell)\prod_{t_i=0}(p_i)\prod_{t_i\neq 0}\sqrt(p_i)) \nonumber\\
& +O(\frac{|\B|}{(p_1\cdots p_\ell)}\log (p_1\cdots p_\ell)\prod_{s_i=0}(p_i)\prod_{s_i\neq 0}\sqrt(p_i))+O(\sqrt{p_1\cdots p_\ell}\log(p_1\cdots p_\ell)^2).\nonumber
\end{align}
\begin{align}
\intertext{By combining the terms together, we get}
\#\{&{E\in \C(A,B)}:\, E\cong_{p_i} \tilde{E}_{s_i,t_i} \text{ for }1\le i\le \ell \} =\frac{4AB}{(p_1\cdots p_\ell)^2}\prod_{i=1}^\ell\left( \frac{p_i-1}{|Aut_{p_i}(E_{s_i,t_i})|}\right)\nonumber\\
&+ O(\sqrt{p_1\cdots p_\ell}\log(p_1\cdots p_\ell)^2)+O\left(\frac{A}{(p_1\cdots p_\ell)}\log (p_1\cdots p_\ell)\left(\prod_{t_i=0}p_i\right)\left(\prod_{t_i\neq 0}\sqrt p_i\right)\right)\nonumber\\
&+O\left(\frac{B}{(p_1\cdots p_\ell)}\log (p_1\cdots p_\ell)\left(\prod_{s_i=0}p_i\right)\left(\prod_{s_i\neq 0}\sqrt p_i\right)\right),\label{eqn_3.6}
\end{align}
 and this proves Proposition \ref{prop_2}.
\end{proof}

\begin{lemma}\label{lm_0.1}
Let $\C(A,B)$ be as above.
\begin{enumerate}[(a)]
 \item If $A, B>N^{\frac{\ell}{2}}(\log N)^{1+\ell+\gamma_2}$ and $AB>N^{\frac{3\ell}{2}}(\log N)^{2+\ell+\gamma_2}$, then
 \begin{align*}\frac{1}{\#\C(A,B)}\sum_{N^-<p_1\neq  \cdots \neq p_\ell<N^+}\{{E\in \C(A,B)}: &{\#E_{p_1}(\F_{p_1})=\cdots=\#E_{p_\ell}(\F_{p_\ell})=N}\}=\\
 &\left( \sum_{N^-<p< N^+}\frac{H(D_N(p))}{p} \right)^\ell+O\left(\frac{1}{(\log N)^{\ell+\gamma_2}}\right).
 \end{align*}
 \item For $r\le \ell$,
 \begin{align*}
  \frac{1}{\#\C(A,B)}\sum_{N^-<p_1,\cdots, p_r<N^+}\sum_{\underset{E_{p_1}(\F_{p_1})=\cdots=E_{p_r}(\F_{p_r})=N}{E\in \C(A,B), \, M_E(N)\ge \ell+1}}1\ll_\ell \left(\frac{H(D_N(p))}{p}\right)^{\ell+1}+\frac{1}{(\log N)^{\ell+\gamma_2}}
 \end{align*}

\end{enumerate}

   \end{lemma}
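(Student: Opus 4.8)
The plan is to fix a tuple of pairwise distinct primes $p_1,\dots,p_\ell$ in $(N^-,N^+)$ and to obtain the inner count by summing the estimate of Proposition \ref{prop_2} over all $\F_{p_i}$-isomorphism classes $\tilde E_{s_i,t_i}$ with $|\tilde E_{s_i,t_i}(\F_{p_i})|=N$. Since, for each $i$, Deuring's theorem gives $\sum_{\tilde E/\F_{p_i}:\,|\tilde E(\F_{p_i})|=N}\frac{1}{\#\mathrm{Aut}(\tilde E)}=H(D_N(p_i))$, and since the main term of Proposition \ref{prop_2} factors over the index $i$, summing over the independent choices of classes yields
\begin{align*}
\#\{E\in\C(A,B):\ \#E_{p_i}(\F_{p_i})=N\ \text{ for }1\le i\le\ell\}=\frac{4AB}{p_1\cdots p_\ell}\prod_{i=1}^\ell H(D_N(p_i))+\widetilde{\mathcal E},
\end{align*}
where $\widetilde{\mathcal E}$ is the quantity $\mathcal E_\ell(A,B,N)$ summed over all isomorphism-class tuples. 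Dividing by $\#\C(A,B)=4AB+O(A+B)$ and using $A,B>N^{\ell/2}$ to absorb the resulting $O(1/A+1/B)$ discrepancy into a negligible error, the main term becomes $\prod_{i=1}^\ell\frac{H(D_N(p_i))}{p_i}$.

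I would then sum over the distinct tuples and use
\begin{align*}
\sum_{p_1\neq\cdots\neq p_\ell}\ \prod_{i=1}^\ell\frac{H(D_N(p_i))}{p_i}=\Bigl(\sum_{N^-<p<N^+}\frac{H(D_N(p))}{p}\Bigr)^{\!\ell}-(\text{diagonal terms}),
\end{align*}
so that it remains to bound the diagonal terms (where two indices coincide) and the accumulated error. By Lemma \ref{lm_1.1} the diagonal terms are $\ll_\ell\bigl(\sum_p H(D_N(p))^2/p^2\bigr)\bigl(\sum_p H(D_N(p))/p\bigr)^{\ell-2}\ll_\ell(\log\log N)^2/\sqrt N$, far below $(\log N)^{-\ell-\gamma_2}$. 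For the accumulated error one uses that by Deuring's theorem and (\ref{eq_13}) each $\F_{p_i}$ carries $\ll\sqrt N\log N\log\log N$ isomorphism classes of order $N$, and that there are $\ll(\sqrt N/\log N)^\ell$ admissible prime tuples (Brun--Titchmarsh), so that $\widetilde{\mathcal E}$, summed over everything and divided by $\#\C(A,B)\asymp AB$, is at most $N^\ell(\log\log N)^{O(\ell)}$ times $\mathcal E_\ell(A,B,N)/(AB)$. Inserting the three pieces of $\mathcal E_\ell(A,B,N)$ and invoking $A,B>N^{\ell/2}(\log N)^{1+\ell+\gamma_2}$ and $AB>N^{3\ell/2}(\log N)^{2+\ell+\gamma_2}$ then makes each piece $\ll(\log N)^{-\ell-\gamma_2}$ up to harmless $\log\log N$ powers; for the third piece one further splits according to how many of the $s_i$ (resp. $t_i$) vanish, using that for any fixed prime only $O(1)$ isomorphism classes have $j$-invariant $0$ or $1728$, which only improves the estimate. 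I expect this error bookkeeping — in particular the treatment of the $\prod_{t_i=0}$, $\prod_{s_i=0}$ sub-products and the control of accumulating $\log\log N$ factors — to be the main, though entirely routine, point of the lemma.

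\textbf{Part (b).} Here the plan is first to interchange summations: for a fixed $E$ the number of $r$-tuples $(p_1,\dots,p_r)$ of primes in $(N^-,N^+)$ with each $E_{p_i}(\F_{p_i})$ of order $N$ is exactly $M_E(N)^r$, so the left-hand side equals $\frac{1}{\#\C(A,B)}\sum_{E\in\C(A,B),\,M_E(N)\ge\ell+1}M_E(N)^r$. For the $E$ occurring in this sum one has $M_E(N)\ge\ell+1$, so the elementary inequality $n^r\le n^\ell\ll_\ell\binom{n}{\ell+1}$ (valid for every integer $n\ge\ell+1$ and $r\le\ell$, since $n^\ell/\binom{n}{\ell+1}$ has finite supremum over $n\ge\ell+1$) gives $M_E(N)^r\ll_\ell\binom{M_E(N)}{\ell+1}$. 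Summing and using $\sum_{E}\binom{M_E(N)}{\ell+1}=\frac{1}{(\ell+1)!}\sum_{q_1\neq\cdots\neq q_{\ell+1}}\#\{E\in\C(A,B):\ \#E_{q_j}(\F_{q_j})=N\ \text{ for }1\le j\le\ell+1\}$ reduces the bound to exactly the quantity treated in part (a) with $\ell$ replaced by $\ell+1$, of which only the upper bound is needed. Feeding that in produces the main term $\bigl(\sum_{N^-<p<N^+}H(D_N(p))/p\bigr)^{\ell+1}$ — the displayed $\bigl(H(D_N(p))/p\bigr)^{\ell+1}$ — together with an error $O\bigl((\log N)^{-\ell-1-\gamma_2}\bigr)$ that is subsumed in $O\bigl((\log N)^{-\ell-\gamma_2}\bigr)$; note that this passage to $\ell+1$ distinct primes requires the hypotheses of part (a) with $\ell+1$ in place of $\ell$.
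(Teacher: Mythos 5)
Your proposal follows essentially the same route as the paper: part (a) by summing Proposition \ref{prop_2} over the Deuring-weighted isomorphism classes to produce $\prod_i H(D_N(p_i))/p_i$, removing the diagonal contributions via Lemma \ref{lm_1.1}, and absorbing the accumulated error using the size hypotheses on $A,B$; and part (b) by the same combinatorial comparison $M_E(N)^r\ll_\ell M_E(N)!/(M_E(N)-\ell-1)!$ (your binomial-coefficient formulation is identical to the paper's Stirling-approximation estimate), reducing the bound to part (a) at level $\ell+1$. Your explicit observation that part (b) therefore needs the part-(a) hypotheses with $\ell+1$ in place of $\ell$ is a point the paper leaves implicit.
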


\begin{proof}

    Note that 
    \begin{align}
\#\{{E\in \C(A,B)}:\, {\#E_{p_1}(\F_{p_1})=\cdots=\#E_{p_\ell}(\F_{p_\ell})=N}\}&\nonumber\\
=\sum_{\underset{\tilde{E}_1(\F_{p_1})=N}{\tilde{E}_1/\F_{p_1}}}\cdots\sum_{\underset{\tilde{E}_\ell(\F_{p_\ell})=N}{\tilde{E}_\ell/\F_{p_\ell}}}&\#\{{E\in \C}:\, E_{p_i}\cong_{p_i} \tilde{E}_{i} \text{ for }1\le i\le \ell \}.\label{eq_2.12}
    \end{align}

    If $N>7$, then $p$ is different from $2$ and $3$. Hence every isomorphism class of curve can be represented in a minimal Weierstrass equation, say $E_{s,t}: y^2=x^3+sx+t$ with $s,t\in \F_p$. Let each of the $E_i$ are given as $E_{s_i,t_i}$.  
     so we can use Proposition \ref{prop_2} to estimate the summand in the right hand side of (\ref{eq_2.12}).

Now for a fixed prime $p_i$, the number of isomorphism class of curves $E_{s_i,t_i}$ with $s_it_i=0$ is at most $10$. Also recall that $\#\C(A,B)=4AB+O(A+B)$ and $H(D_N(p_i))=\underset{E_{s_i,t_i}/\F_{p_i}}{\sum}\frac{1}{|Aut_{p_i}(E_{s_i,t_i})|}$. 
Thus dividing (\ref{eq_2.12}) by $\C(A,B)$, the sum in the first part of the lemma equals to 
\begin{align}
 \Sigma_1&=\sum_{N^-<p_1\neq p_2\neq \cdots \neq p_\ell<N^+}\sum_{\underset{\tilde{E}_1(\F_{p_1})=N}{\tilde{E}_1/\F_{p_1}}}\cdots\sum_{\underset{\tilde{E}_\ell(\F_{p_\ell})=N}{\tilde{E}_\ell/\F_{p_\ell}}}\prod_{i=1}^\ell\frac{1}{p_i|Aut_{p_i}(E_{s_i,t_i})|}+\hat{\mathcal{E}}_\ell(A,B, N)\nonumber\\
 &= \sum_{N^-<p_1\neq p_2\neq \cdots \neq p_\ell<N^+}\left(\prod_{i=1}^\ell\frac{H(D_N(p_i))}{p_i}\right)+\hat{\mathcal{E}}_\ell(A,B, N)\label{eqn_4.2}
\end{align}
with $$\hat{\mathcal{E}}_\ell(A,B, N)\ll \left\{\frac{1}{N^{2\ell}}+\frac{\log N}{N^{\frac{\ell}{2}}}\left(\frac{1}{A}+\frac{1}{B}\right)+\frac{N^{\frac{\ell}{2}}(\log N)^2}{AB}\right\}\left(\frac{N\log \log N}{\log N}\right)^\ell$$
where the implied constant depends on $\ell$ only. 
Also since $A, B>N^{\frac{\ell}{2}}(\log N)^{1+\ell+\gamma_2}$, and $AB>N^{\frac{3\ell}{2}}(\log N)^{2+\ell+\gamma_2}$ it follows that $$\hat{\mathcal{E}}_\ell(A,B,N)\ll \frac{1}{(\log N)^{\ell+\gamma_2}}.$$

Further if we relax the condition $p_1\neq p_2\neq \cdots \neq p_\ell$ from the right hand side of (\ref{eqn_4.2}), then one gets

\begin{align}
\Sigma_1&= \sum_{\underset{N^-<p_i<N^+ \,\,  \forall i}{(p_1,p_2,\cdots, p_\ell)}}\prod_{i}\frac{H(D_N(p_i))}{p_i}+\sum_{\underset{N^-<p_i<N^+ \,\,  \forall i}{\underset{p_i=p_j \text{ for some }i\neq j}{(p_1,p_2,\cdots, p_\ell)}}}\prod_{i}\frac{H(D_N(p_i))}{p_i}+O\left(\frac{1}{(\log N)^{\ell+\gamma_2}}\right)\nonumber\\
&=\left(\sum_{N^<-p<N^+}\frac{H(D_N(p))}{p}\right)^\ell+O\left(\sum_{r=2}^\ell \left(\sum_{N^-<p<N^+}\frac{H(D_N(p))}{p}\right)^{\ell-r}\sum_{N^-<p<N^+}\frac{H(D_N(p))^r}{p^r}\right)\nonumber\\
&\quad \quad 
\quad \quad \quad \quad \quad \quad \quad \quad \quad \quad +O\left(\frac{1}{(\log N)^{\ell+\gamma_2}}\right)\nonumber\\
\end{align}     
Using Lemma \ref{lm_1.1} it is easy to see that     $$\sum_{r=2}^\ell\left(\sum_{N^-<p<N^+}\frac{H(D_N(p))^r}{p^r}\right)\left(\sum_{N^-<p<N^+}\frac{H(D_N(p))}{p}\right)^{\ell-r}\ll O(N^{-\frac{1}{2}+\epsilon})$$
for any small $\epsilon>0$.
Hence \begin{align}\Sigma_1&=\left(\sum_{N^-<p<N^+}\frac{H(D_N(p))}{p}\right)^\ell+O\left(\frac{1}{(\log N)^{\ell+\gamma_2}}\right).\label{eq_2.5}\end{align}
This proves the result part (a) of the Lemma.\par
Now, if for a curve $E$, $M_E(N)=L\ge l+1$, then $E$ is counted $L^r$ times in part (b). While the same $E$ will be counted $\frac{L!}{(L-\ell-1)!}$ times if we consider the expression
$$\frac{1}{\#\C(A,B)}\sum_{N^-<p_1\neq  \cdots \neq p_{\ell+1}<N^+}\{{E\in \C(A,B)}: {\#E_{p_1}(\F_{p_1})=\cdots=\#E_{p_{\ell+1}}(\F_{p_{\ell+1}})=N}\}$$

Using Stirling's approximation, is easy to see that $\frac{L^r(L-\ell-1)!}{L!}\ll e^{\ell}$ for $r\le \ell$. Hence part (b) follows from part (a).
\end{proof}

Using the previous lemma and modifying the proof of part (a) we shall prove an asymptotic of the left hand side of  Lemma (\ref{lm_0.1}). More precisely we state the following

\begin{prop}\label{lm_1}
Let $M_E(N)$ and $\C(A,B)$ be defined as above. If $A, B>N^{\frac{\ell+\gamma_1}{2}}(\log N)^{1+\ell+\gamma_2}$ and $AB>N^{\frac{3(\ell+\gamma_1)}{2}}(\log N)^{2+\ell+\gamma_2}$, then for any positive integer $r\le \ell$,
\begin{align*}
 \frac{1}{\#\C(A,B)}\sum_{\underset{M_E(N)\ge \ell}{E\in \C(A,B)}}M_E(N)^r=\sum_{j=\ell}^{\ell+\gamma_1}d_{\ell,r}(j)&\left(\sum_{N^-<p<N^+}\frac{H(D_N(p))}{p}\right)^{j}\\
& + O\left(\sum_p\frac{H(D_N(p))}{p}\right)^{\ell+\gamma_1+1}
 +O\left(\frac{1}{(\log N)^{\ell+\gamma_2}}\right).
\end{align*}

\end{prop}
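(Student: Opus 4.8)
The plan is to express the quantity $\frac{1}{\#\C(A,B)}\sum_{M_E(N)\ge \ell} M_E(N)^r$ in terms of the falling-factorial counts that were estimated in Lemma \ref{lm_0.1}. For a curve $E$ with $M_E(N)=k$, the number of ordered $j$-tuples of distinct primes $p_1\neq\cdots\neq p_j$ in $[N^-,N^+]$ with $\#E_{p_i}(\F_{p_i})=N$ for all $i$ is exactly $k(k-1)\cdots(k-j+1)$ when $j\le k$, and $0$ otherwise. Writing $(k)_j$ for this falling factorial, Lemma \ref{lm_0.1}(a), applied with $\ell$ replaced by $j$, gives
\begin{align*}
S_j:=\frac{1}{\#\C(A,B)}\sum_{\underset{M_E(N)\ge j}{E\in\C(A,B)}}(M_E(N))_j = \left(\sum_{N^-<p<N^+}\frac{H(D_N(p))}{p}\right)^{j}+O\left(\frac{1}{(\log N)^{j+\gamma_2}}\right),
\end{align*}
provided the size conditions on $A,B$ hold for that $j$; since $\gamma_1\ge 0$ and our hypotheses are stated for $\ell+\gamma_1$, the conditions are met for all $\ell\le j\le \ell+\gamma_1$, and a fortiori for $j>\ell+\gamma_1$ the sum $S_j$ is still controlled (it is $\ll (\sum_p H(D_N(p))/p)^j$ by the trivial bound obtained by dropping the distinctness restriction, exactly as in the proof of Lemma \ref{lm_0.1}(b)).

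Next I would invert the relation between $k^r$ and the falling factorials $(k)_j$. The key combinatorial identity is that for each fixed $r$ there is a Stirling-type expansion; more directly, one checks from the definition \eqref{eq_23} that $\sum_{j=\ell}^{k} d_{\ell,r}(j)\,(k)_j = k^r$ for every integer $k\ge \ell$, while the same sum vanishes... (actually the cleaner route:) one verifies $d_{\ell,r}(j)\cdot j! = \sum_{i=\ell}^{j}\binom{j}{i}(-1)^{j-i}i^r$ so that $\sum_{j}d_{\ell,r}(j)(k)_j$ telescopes by the finite-difference/binomial inversion to $k^r\mathbf{1}_{k\ge\ell}$. Granting this identity, summing over $E$ with $M_E(N)\ge\ell$ yields
\begin{align*}
\frac{1}{\#\C(A,B)}\sum_{\underset{M_E(N)\ge \ell}{E\in\C(A,B)}}M_E(N)^r = \sum_{j\ge \ell} d_{\ell,r}(j)\,S_j,
\end{align*}
the sum over $j$ being finite since $M_E(N)\le L$ for every $E\in\C(A,B)$ (the finiteness of the class forces a uniform bound), or alternatively since $M_E(N)\ll \sqrt N/\log N$ by \eqref{eq_1}. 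Splitting this at $j=\ell+\gamma_1$: the terms $\ell\le j\le \ell+\gamma_1$ contribute the main sum $\sum_{j=\ell}^{\ell+\gamma_1}d_{\ell,r}(j)\big(\sum_p H(D_N(p))/p\big)^j$ plus error $O\big(\sum_{j=\ell}^{\ell+\gamma_1}(\log N)^{-j-\gamma_2}\big)=O\big((\log N)^{-\ell-\gamma_2}\big)$, while the tail $j\ge \ell+\gamma_1+1$ is $O\big((\sum_p H(D_N(p))/p)^{\ell+\gamma_1+1}\big)$ because each $S_j$ with $j>\ell+\gamma_1$ is bounded by a constant times $(\sum_p H(D_N(p))/p)^j \le (\sum_p H(D_N(p))/p)^{\ell+\gamma_1+1}$ (here one uses that $\sum_p H(D_N(p))/p \ll N/(\phi(N)\log N)$ is bounded, via Lemma \ref{lm_1.1}(a), so higher powers only shrink) and the $d_{\ell,r}(j)$ are summable, e.g. $|d_{\ell,r}(j)|\le j^r/j! \cdot e$.

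The main obstacle I anticipate is twofold. First, establishing the combinatorial inversion identity $\sum_{j=\ell}^{k}d_{\ell,r}(j)(k)_j=k^r$ cleanly and verifying it does the right thing both for $k\ge\ell$ and for the implicit vanishing when the constraint $M_E(N)\ge\ell$ bites — this is elementary finite-difference calculus but must be done carefully so that the ranges of summation in \eqref{eq_23} line up with the range $M_E(N)\ge\ell$ in the proposition. Second, and more delicate, is justifying the interchange of the (finite but $N$-dependent, since $L$ depends on the size of $\C(A,B)$) sum over $j$ with the averaging over $E$, and showing the tail sum over $j>\ell+\gamma_1$ genuinely collapses into the single error term $O\big((\sum_p H(D_N(p))/p)^{\ell+\gamma_1+1}\big)$ uniformly; this requires the uniform decay of $S_j$ in $j$ coming from Lemma \ref{lm_1.1}(a) together with the superexponential decay of $d_{\ell,r}(j)$, and one must check the size hypotheses on $A,B$ (stated for $\ell+\gamma_1$) are strong enough to make Lemma \ref{lm_0.1} applicable for every relevant $j$, not just $j=\ell$.
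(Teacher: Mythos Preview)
Your strategy is essentially the paper's: both proofs pivot on Lemma~\ref{lm_0.1}(a), which computes the falling-factorial averages $S_j=\frac{1}{\#\C(A,B)}\sum_E (M_E(N))_j$, and then combinatorially reassemble $M_E(N)^r\,\mathbf 1_{M_E(N)\ge\ell}$ from these. The paper reaches the coefficients $d_{\ell,r}(j)$ by a longer route --- it expands $M_E(N)^r$ over $r$-tuples of (possibly repeated) primes, reduces to distinct tuples via Stirling numbers of the second kind, introduces auxiliary quantities $\omega,\Omega,\Upsilon$ satisfying the relations~\eqref{eq_4}, and then \emph{solves} for the coefficients $z_{\ell,r}(j)$ by inverting a triangular matrix (Lemma~\ref{lm_2}). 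Your route is shorter: you verify the finite-difference identity $\sum_{j=\ell}^{k}d_{\ell,r}(j)(k)_j=k^r$ for $k\ge\ell$ directly (this is straightforward once one swaps the two sums in~\eqref{eq_23} and recognises $\sum_m(-1)^m\binom{k-i}{m}=[i=k]$), so the $d_{\ell,r}(j)$ appear from the outset rather than emerging from a linear system. Both arguments are the same in substance; yours simply avoids the detour through Stirling numbers and Lemma~\ref{lm_2}.

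One point where your write-up needs tightening is the tail $\sum_{j>\ell+\gamma_1}d_{\ell,r}(j)S_j$. You propose to bound each $S_j$ by $\bigl(\sum_p H(D_N(p))/p\bigr)^j$ via Lemma~\ref{lm_0.1}(a), but that lemma at level $j$ requires $A,B\gg N^{j/2}(\log N)^{\cdots}$, which you do \emph{not} have once $j>\ell+\gamma_1$; and the crude bound $|d_{\ell,r}(j)|\le e\,j^r/j!$ is also not quite right (one gets $2^j j^r/j!$). The clean fix, which is exactly what the paper does, is not to bound each tail $S_j$ separately but to observe that for $k\ge\ell+\gamma_1+1$ both $k^r$ and each $(k)_j$ with $j\le\ell+\gamma_1$ are $\ll_{\ell,\gamma_1}(k)_{\ell+\gamma_1+1}$, so the entire tail is $\ll S_{\ell+\gamma_1+1}$, i.e.\ a single instance of Lemma~\ref{lm_0.1}(b) with $\ell$ replaced by $\ell+\gamma_1$. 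This uses only the size hypotheses actually assumed in the proposition.
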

\begin{proof}
 \begin{align}
\frac{1}{\#\C(A,B)}\sum_{\underset{M_E(N)\ge \ell}{E\in \C(A,B)}}M_E(N)^r &= \frac{1}{\#\C(A,B)}\sum_{\underset{M_E(N)\ge \ell}{E\in \C(A,B)}}\left(\sum_{\underset{E_p(\F_p)=N}{N^-<p<N^+}}1 \right)^r\nonumber\\
&= \frac{1}{\#\C(A,B)}\sum_{N^-<p_1,\cdots, p_r<N^+}\sum_{\underset{E_{p_1}(\F_{p_1})=\cdots=E_{p_r}(\F_{p_r})=N}{E\in \C(A,B), \, M_E(N)\ge \ell}}1.\nonumber
\intertext{By breaking the sum into two parts we get the following}
 \frac{1}{\#\C(A,B)}\sum_{N^-<p_1,\cdots, p_r<N^+}&\sum_{j=\ell}^{\ell+\gamma_1}\sum_{ M_E(N)= j}1+ \frac{1}{\#\C(A,B)}\sum_{N^-<p_1,\cdots, p_r<N^+}\sum_{ M_E(N)\ge \ell+\gamma_1+1}1\label{eq_24}
 \end{align}
where the range of summation is over $E\in \C(A,B)$ with ${E_{p_1}(\F_{p_1})=\cdots=E_{p_r}(\F_{p_r})=N}$.\\
 Now, by Lemma \ref{lm_0.1}(b), the last sum in the right hand side is bounded by $$\left(\sum_{N^-<p<N^+}\frac{H(D_N(p)}{p}\right)^{\ell+\gamma_1+1}+O(\frac{1}{(\log N)^{\ell+\gamma_2}})$$

Now, we claim that for $r\le\ell\le j\le \ell+\gamma_1$\begin{align}
                    \sum_{N^-<p_1\neq p_2\neq \cdots \neq p_r<N^+}\sum_{\underset{E_{p_1}(\F_{p_1})=\cdots=E_{p_r}(\F_{p_r})=N}{E\in \C(A,B), \, M_E(N)= j}}1&=\frac{1}{(j-r)!}\sum_{N^-<p_1\neq p_2\neq \cdots \neq p_j<N^+}\sum_{\underset{E_{p_1}(\F_{p_1})=\cdots=E_{p_r}(\F_{p_r})=N}{E\in \C(A,B), \, M_E(N)= j}}1\label{eq_27}
                    \intertext{In fact, any curve $E\in \C(A,B)$ with $M_E(N)=j$ is counted $\frac{j!}{(j-r)!}$ times in the left hand side summation, while on the right hand side, the same curve is counted $j!$ times.}
                    \end{align}
Note that we now consider the first term of (\ref{eq_24}), the primes in the range of summations in (\ref{eq_24}) are not distinct.                    
Then recalling the definition of $S(n,m)$, Stirling number of the second kind, which equals to the number of ways of partitioning a set of $n$ elements into $m$ nonempty sets, we get
            
\begin{align}
 \sum_{N^-<p_1,\cdots, p_r<N^+}\sum_{\underset{E(\F_{p_1})=\cdots=E(\F_{p_r})=N}{E\in \C, \, M_E(N)= j}}1
 &=\left( \sum_{m=1}^r\frac{S(r,m)}{(j-m)!}\right)\sum_{N^-<p_1\neq p_2\neq \cdots \neq p_j<N^+}\sum_{\underset{E_{p_1}(\F_{p_1})=\cdots=E_{p_r}(\F_{p_r})=N}{E\in \C(A,B), \, M_E(N)= j}}1.\label{eq_2.7}
\end{align}                    
 To simplify the constant on the right hand side, we use the fact that  $ \sum_{m=1}^r\frac{S(r,m)j!}{(j-m)!}=j^r$. See [(4.1.3), p. 60 , \cite{Roman:1984}].\par             

 With this 
\begin{align}
&\sum_{N^-<p_1\neq p_2\neq \cdots \neq p_j<N^+}\sum_{\underset{E_{p_1}(\F_{p_1})=\cdots=E_{p_r}(\F_{p_r})=N}{E\in \C(A,B), \, M_E(N)= j}}1 \nonumber\\
&=\sum_{N^-<p_1\neq p_2\neq \cdots \neq p_j<N^+}\sum_{\underset{E(\F_{p_1})=\cdots=E(\F_{p_j})=N}{E\in \C(A,B), \, M_E(N)\ge j}}1
                     -\sum_{N^-<p_1\neq p_2\neq \cdots \neq p_j<N^+}\sum_{\underset{E(\F_{p_1})=\cdots=E(\F_{p_j})=N}{E\in \C(A,B), \, M_E(N)\ge j+1}}1\label{eq_19}
\end{align}
Now we denote the left hand side of (\ref{eq_27}) by ${\#\mathcal{C}(A,B)}\times\omega(r,j)$ and the first term of the right hand side of (\ref{eq_19}) by ${\#\mathcal{C}(A,B)}\times\Omega(j,j)$. Also we call the left hand side of (\ref{eq_2.7}) by ${\#\mathcal{C}(A,B)}\times\Upsilon(r,j)$. Then in view of (\ref{eq_27}) and (\ref{eq_2.7}), we get the following set of relations
\begin{align}
\left\{ \begin{array}{ll}
\Upsilon(r,j)=\frac{j^r}{j!}\omega(j,j),\\
\Omega(t,s)=\overset{\infty}{\underset{n=s}{\sum}} \omega(t,n) \quad \text{for }t\le s,\\
\omega(t,n)=\frac{1}{(n-t)!}\omega(n,n) \quad \text{for }t\le n.
 \end{array}
\right.\label{eq_4}
\end{align}
Now by Lemma \ref{lm_0.1}(a), 
\begin{align*}
 \Omega(j,j)=\left(\sum_{N^-<p<N^+}\frac{H(D_N(p))}{p}\right)^j+O(\frac{1}{(\log N)^{j+\gamma_2}}),
\end{align*}
whenever $A,B>N^{\frac{j}{2}}(\log N)^{1+j+\gamma_2}$ and $AB>N^{\frac{3j}{2}}(\log N)^{2+j+\gamma_2}$. \par
 
 Now, we replace $\overset{\ell+\gamma_1}{\underset{j=\ell}{\sum}}\Upsilon(r,j)$ by $\overset{\ell+\gamma_1}{\underset{j=\ell}{\sum}}z_{\ell,r}(j)\Omega(j,j)+O(\Omega(\ell+\gamma_1,\ell+\gamma_1+1))$ where $\{z_{\ell,r}(j)\}$ are some constants to be determined using (\ref{eq_4}). 
Also note that $\Omega(\ell+\gamma_1,\ell+\gamma_1+1)\ll \left(\sum_p\frac{H(D_N(p))}{p}\right)^{\ell+\gamma_1}+\frac{1}{(\log N)^{\ell+\gamma_2}}$.\par
Then (\ref{eq_24}) equals to 
\begin{align*}
 \sum_{j=\ell}^{\ell+\gamma_1}z_{\ell,r}(j)\left(\sum_{N^-<p<N^+}\frac{H(D_N(p))}{p}\right)^j+O\left(\sum_{N^-<p<N^+}\frac{H(D_N(p))}{p}\right)^{\ell+\gamma_1+1}+O\left(\frac{1}{(\log N)^{\ell+\gamma_2}}\right).
\end{align*}
Only thing that remains to be shown is that $\{z_{\ell,r}(j)\}_j$ are equals to $\{d_{\ell,r}(j)\}_j$, as defined in (\ref{eq_23}). For that, we prove the following lemma.\par

\end{proof}

\begin{lemma}\label{lm_2}
Consider $\omega,\, \Omega$ as variables satisfying the identities in (\ref{eq_4}). Then the solution of the equation
$$\sum_{j=\ell}^\infty \frac{j^r}{j!}\omega(j,j)=\sum_{j=\ell}^{\infty}z_{\ell,r}(j)\Omega(j,j)$$
in $z_{\ell,r}(j)$ is given by $$z_{\ell,r}(j)=\sum_{k=\ell}^j\frac{k^r}{k!}\frac{(-1)^{j-k}}{(j-k)!}.$$
\end{lemma}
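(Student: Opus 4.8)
The plan is to reduce the claimed identity to a triangular linear system in the diagonal quantities $\omega(n,n)$ and then solve that system by exponential generating functions. First I would use the last two relations of (\ref{eq_4}) to express the right-hand side purely in terms of $\omega(n,n)$: since $\omega(j,n)=\omega(n,n)/(n-j)!$ for $j\le n$, we get $\Omega(j,j)=\sum_{n\ge j}\omega(j,n)=\sum_{n\ge j}\omega(n,n)/(n-j)!$. Substituting into $\sum_{j\ge\ell}z_{\ell,r}(j)\Omega(j,j)$ and interchanging the order of summation yields
$$\sum_{j\ge\ell}z_{\ell,r}(j)\Omega(j,j)=\sum_{n\ge\ell}\omega(n,n)\sum_{j=\ell}^{n}\frac{z_{\ell,r}(j)}{(n-j)!}.$$
Comparing this with $\sum_{j\ge\ell}\frac{j^r}{j!}\omega(j,j)=\sum_{n\ge\ell}\frac{n^r}{n!}\omega(n,n)$ and regarding the $\omega(n,n)$ as free indeterminates, the identity of the lemma is equivalent to the system
$$\sum_{j=\ell}^{n}\frac{z_{\ell,r}(j)}{(n-j)!}=\frac{n^r}{n!}\qquad(n\ge\ell).$$

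Next I would observe that this system is lower triangular with all diagonal coefficients equal to $1$ (the $j=n$ term contributes $z_{\ell,r}(n)/0!$), so it has a unique solution; it then remains to verify that the stated formula satisfies it. The cleanest route is generating functions: writing $Z(x)=\sum_{j\ge\ell}z_{\ell,r}(j)x^{j}$ and $A(x)=\sum_{n\ge\ell}\frac{n^r}{n!}x^{n}$ as formal power series, the system says exactly $Z(x)e^{x}=A(x)$, hence $Z(x)=A(x)e^{-x}$, and extracting the coefficient of $x^{j}$ gives precisely $z_{\ell,r}(j)=\sum_{k=\ell}^{j}\frac{k^r}{k!}\frac{(-1)^{j-k}}{(j-k)!}$, which is the formula in (\ref{eq_23}). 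Alternatively one can check it directly: substituting the formula into the left side of the system, swapping sums, and setting $i=j-k$ gives
$$\sum_{j=\ell}^{n}\frac{1}{(n-j)!}\sum_{k=\ell}^{j}\frac{k^r}{k!}\frac{(-1)^{j-k}}{(j-k)!}=\sum_{k=\ell}^{n}\frac{k^r}{k!}\cdot\frac{1}{(n-k)!}\sum_{i=0}^{n-k}\binom{n-k}{i}(-1)^{i}=\sum_{k=\ell}^{n}\frac{k^r}{k!}\cdot\frac{(1-1)^{n-k}}{(n-k)!}=\frac{n^r}{n!},$$
since $(1-1)^{n-k}$ vanishes unless $k=n$.

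Since the triangular system has a unique solution and the displayed formula satisfies it, we conclude $z_{\ell,r}(j)=d_{\ell,r}(j)$. The only point that needs a little care is the interchange of summations and the passage to a coefficient-wise identity: this is legitimate because the $\omega(n,n)$ are treated as free variables (equivalently, for each fixed $n$ only the finitely many $z_{\ell,r}(j)$ with $\ell\le j\le n$ occur, so one may proceed by induction on $n$), so there is no genuine analytic obstacle. The heart of the argument is just the binomial identity $\sum_{i}\binom{m}{i}(-1)^{i}=0$ for $m\ge 1$, together with the exponential-generating-function bookkeeping above.
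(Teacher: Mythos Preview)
Your proof is correct and follows essentially the same route as the paper: both expand $\Omega(j,j)$ via the relations in (\ref{eq_4}), interchange summations, and compare coefficients of $\omega(n,n)$ to arrive at the same lower-triangular system $\sum_{j=\ell}^{n}\frac{z_{\ell,r}(j)}{(n-j)!}=\frac{n^r}{n!}$. The only cosmetic difference is in solving this system: the paper writes it as a matrix equation $AZ=J$ and asserts that $A^{-1}$ has entries $(-1)^{m-n}/(m-n)!$, whereas you solve it via the exponential-generating-function identity $Z(x)=e^{-x}A(x)$ (or, equivalently, verify the formula directly with the binomial identity $\sum_i\binom{m}{i}(-1)^i=\delta_{m,0}$); these are the same computation in different notation.
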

\begin{proof}
Using the second equation in (\ref{eq_4}), we have
\begin{align}
\sum_{j=\ell}^\infty \frac{j^r}{j!}\omega(j,j)&=\sum_{j=\ell}^{\infty}z_{\ell,r}(j)\Omega(j,j)\nonumber\\
 &=\sum_{j=\ell}^{\infty}z_{\ell,r}(j)\sum_{n=j}^{\infty}\omega(j,n).\nonumber
 \intertext{By changing the order of summation, the right hand side equals to}
 &=\sum_{n=\ell}^\infty\sum_{\ell\le j\le n}z_{\ell,r}(j)\omega(j,n)=\sum_{j=\ell}^\infty\sum_{\ell\le n\le j}z_{\ell,r}(n)\omega(n,j)\nonumber
 \intertext{But by the last relation in (\ref{eq_4}), this can be written as }
 &\sum_{j=\ell}^\infty \left(\sum_{\ell\le n\le j}\frac{z_{\ell,r}(n)}{(j-n)!}\right)\omega(j,j)\nonumber
\end{align}
Thus, comparing the coefficients of $\omega(j,j)$ from both sides, we get
\begin{align}
 \sum_{\ell\le n\le j}\frac{z_{\ell,r}(n)}{(j-n)!}=\frac{j^r}{j!} \quad \quad\text{ for }j\ge \ell.\label{eq_6}
\end{align}
Since we are only interested in the values of $z_{\ell,r}(n)$ for $\ell \le n\le \ell+\gamma_1$, we consider the folowing matrix equation 
\begin{align*}
AZ=J, 
\end{align*}
where $A$ is the $(\ell+\gamma_1+1)\times \ (\ell+\gamma_1+1)$ matrix $\left(a_{mn}\right)_{m,n}$, where 
\begin{align*}
 a_{mn}&=\left\{\begin{array}{ll}
          0,\quad \text{ if }m<n,\\
          \frac{1}{(m-n)!} \quad \text{ if } m\ge n; 
         \end{array}
\right.
\end{align*}
Also  $Z$ and $J$ are the column matrices $$\begin{bmatrix} z_{\ell,r}(\ell) & z_{\ell,r}(\ell+1) &\cdots & z_{\ell,m}(\ell+\gamma_1)\end{bmatrix}^{\mathrm{T}}$$ and 
$$\begin{bmatrix} \frac{\ell^r}{\ell!} & \frac{(\ell+1)^r}{(\ell+1)!} &\cdots & \frac{(\ell+\gamma_1)^r}{(\ell+\gamma_1)!}\end{bmatrix}^{\mathrm{T}}$$ respectively.\par 

Now it is not difficult to check that $A$ s as invertible matrix with inverse $B=\left(b_{mn}\right)$, where  
\begin{align*}
 b_{mn}=(-1)^{m-n}a_{mn}. 
\end{align*}
Finally, using $Z=A^{-1}J=BJ$, we get the desired value of $z_{\ell,r}(j)$'s. This completes the proof of the lemma.

\end{proof}

\section{Proof of Theorem \ref{th_1} and Theorem \ref{Th_2}}
Putting $\ell=1$, $r=1$ and $\gamma_1=0$, $\gamma_2=\gamma$, from Proposition \ref{lm_1} we get,
\begin{align}
\frac{1}{\#\C(A,B)}\sum_{E\in \C(A,B)}M_E(N)=\sum_{N^-<p<N^+}\frac{H(D_N(p))}{p}+O\left(\left(\sum_{N^-<p<N^+}\frac{H(D_N(p))}{p}\right)^2\right)\nonumber\\
+O(\frac{1}{(\log N)^{1+\gamma}})& \label{eq_7}
\end{align}
for appropriate $A$, $B$. Then, using (\ref{eq_7}), we replace $\underset{N^-<p<N^+}{\sum}\frac{H(D_N(p))}{p}$ in \emph{Proposition \ref{lm_1}} by $\frac{1}{\#\C(A,B)}\sum_{E\in \C(A,B)}M_E(N)$. We also recall that $d_{\ell,r}(\ell)=\frac{\ell^r}{\ell!}$. 
Now take $\gamma_1=0$, $r=1$ and consider the sum $\frac{1}{\#\C(A,B)}\underset{\underset{M_E(N)=\ell}{E\in \C(A,B)}}{\sum}M_E(N)=\frac{1}{\#\C(A,B)}\underset{\underset{M_E(N)=\ell}{E\in \C(A,B)}}{\sum} \ell$. 
Then dividing the last equation by $\ell$, \emph{Theorem \ref{th_1}} follows immidiately from the above discussion.\par \medskip Again, (\ref{eq_7}) together with Proposition \ref{lm_1} and Theorem \ref{thmm_1} completes the proof of  Theorem \ref{Th_2}.

\section{Proof of Theorem \ref{Th_1}}
First of all note that
\begin{align*}
\sum_{N^-<p<N^+}\frac{H(D_N(p))}{p}
 &=\frac{1}{N}\sum_{N^-<p<N^+}H(D_N(p))\left(1+O\left(\frac{1}{\sqrt{N}}\right)\right)\\
 &=\frac{1}{N}\sum_{N^-<p<N^+}H(D_N(p))+\frac{1}{N^{\frac{3}{2}}}\sum_{N^-<p<N^+}|H(D_N(p))|
\end{align*}
Now from Lemma \ref{lm_1.1}(a), we get 
 \begin{align*}
\sum_{N^-<p<N^+}\frac{H(D_N(p))}{p}&=\frac{1}{N}\sum_{N^-<p<N^+}H(D_N(p))+O\left(\frac{\log \log N}{\sqrt{N}\log N}\right)
\intertext{Also }
\left(\sum_{N^-<p<N^+}\frac{H(D_N(p))}{p}\right)^j&=\frac{1}{N^j}\left(\sum_{N^-<p<N^+}H(D_N(p))\right)^j+O\left(\frac{1}{\sqrt{N}}\right)
\end{align*}
Then 
\begin{align*}
\sum_{N\le x}\left(\sum_{N^-<p<N^+}\frac{H(D_N(p))}{p}\right)^j
&=\sum_{N\le x}\frac{1}{N^j}\left(\sum_{N^-<p<N^+}H(D_N(p))\right)^j+O(\sqrt{x})\\
&=\sum_{N\le x}\left(\frac{K(N)N}{\phi(N)\log N}\right)^j+\tilde{\mathcal{E}}_1
\end{align*}

To bound the error $\tilde{\mathcal{E}}_1$, note that
\begin{align*}
\tilde{\mathcal{E}}_1\ll& \sum_{N\le x}\frac{1}{N^j} \left|\left(\sum_{N^-<p<N^+}H(D_N(p))\right)^j-\left(\frac{K(N)N^2}{\phi(N)\log N}\right)^j\right|+O(\sqrt{x})
\intertext{ Using Lemma \ref{lm_1.1}(a), the right hand side is bounded by}
 & \sum_{N\le x}\frac{1}{N^j}\left|\sum_{N^-<p<N^+}H(D_N(p))-\frac{K(N)N^2}{\phi(N)\log N}\right|\left(\frac{N^2}{\phi(N)\log N}\right)^{j-1}+O(\sqrt{x})\\
\ll & \frac{1}{x}\sum_{N\le x}\left|\sum_{N^-<p<N^+}H(D_N(p))-\frac{K(N)N^2}{\phi(N)\log N}\right|+\sqrt{x}
\end{align*}

Using Proposition \ref{prop_1} with $R=1+\ell+\gamma_1$, the last summation is 
 $$ \ll_{\ell,\gamma_1}  \frac{x}{(\log x)^{1+\ell+\gamma_1}}+\sqrt{x}.$$ 

 Only thing that remains is to estimate the main term, i.e. $$\sum_{N\le x}\left(\frac{K(N)N}{\phi(N)\log N}\right)^j$$
for every $\ell\le j\le \ell+\gamma_1$.
To do this, we write 
$$\left(\frac{K(N)N}{\phi(N)}\right)^j=\varTheta F(N-1)G(N)$$
where \begin{align*}
       \varTheta&=\prod_{p>2}\left(1-\frac{1}{(p-1)^2}\right)^j\\
       F(N)&=\prod_{\underset{p>2}{p\mid N}}\left(1-\frac{1}{(p-1)^2}\right)^{-j}\prod_{p\mid N}\left( 1-\frac{1}{(p-1)^2(p+1)} \right)^j\\
       G(N)&=\left(\frac{N}{\phi(N)}\right)^j\prod_{\underset{p>2}{p\mid N}}\left( 1-\frac{1}{(p-1)^2}\right)^{-j}\prod_{p\mid N}\left(1-\frac{1}{p^{\nu_p(N)}(p-1)}\right)^j
      \end{align*}
Note that both $F$ and $G$ are multiplicative functions.
We use Theorem 1 of \cite{Balasubramanian_Giri:2015} with $A(n)=B(n)=1$, and hence $M(x)=x$. Also if we set  
  \begin{align}f(m)&=\underset{d\mid m}{\sum}\mu(d)F(m/d)\label{eq_17}\\ \intertext{and}
  g(m)&=\underset{d\mid m}{\sum}\mu(d)G(m/d),\label{eq_18}\end{align}
  then $f, g$ are multiplicative functions. So it is enough to compute the values on prime powers. It is straight forward to check that 
 \begin{align*}
 f(p^t)&=\left \{ \begin{array}{ll}
                         1, \quad \quad\text{if }t=0\\
                         \left(1-\frac{1}{(p-1)^2}\right)^{-j}\left( 1-\frac{1}{(p-1)^2(p+1)} \right)^j-1, \quad \text{if }t=1\\
                         0, \quad \quad \text{else,}\\
                        \end{array}
\right. \\ \intertext{and}
g(p^t)&=\left \{\begin{array}{ll}
                 1, \quad \text{if }t=0\\
                 (\frac{p}{p-1})^j(1-\frac{1}{(p-1)^2})^{-j}(1-\frac{1}{p(p-1)})^j-1, \quad \text{if }t=1\\
                 (\frac{p}{p-1})^j(1-\frac{1}{(p-1)^2})^{-j}[(1-\frac{1}{p^{t}(p-1)})^j-(1-\frac{1}{p^{t-1}(p-1)})^j],\quad \text{ if }t\ge 2,
                \end{array}
\right.
 \end{align*}
  for an odd prime $p$.\par
  Also
  \begin{align*}f(2^t)&=\left \{ \begin{array}{ll}
                         (2/3)^j-1, \quad \quad\text{if }t=1\\
                         0, \quad \quad \text{if }t\ge 2,\\
                        \end{array}
\right. \\ \intertext{and}
g(2^t)&=\left \{ \begin{array}{ll}
                 0, \quad \text{for }t= 1\\
                         2^{j}[(1-\frac{1}{2^t})^j-(1-\frac{1}{2^{t-1}})^j], \quad \quad \text{if }t\ge 2.\\
                        \end{array}
\right. \end{align*}

Then from (Theorem 1, \cite{Balasubramanian_Giri:2015}), we know  
$$\frac{1}{x}\sum_{N\le x}\left(\frac{K(N)N}{\phi(N)}\right)^j= \varTheta \sum_{N\le x}F(N-1)G(N)=\varTheta \prod_{p}\left( 1+\sum_{t\ge 1}\frac{f(p^t)+g(p^t)}{p^t}\right)+O\left(\frac{\log x}{x}\right).$$
But the constant in the main term is nothing but the $C(j)$, which has been defined in (\ref{eq_C}). 
 Using partial summation we get 
 $$\sum_{N\le x}\left(\frac{K(N)N}{\phi(N)\log N}\right)^j=C(j)\int_{2}^x\frac{1}{(\log t)^j}\, dt+O\left(\frac{x}{(\log x)^{R_1}}\right)$$ for any $R_1>0$. By choosing $R_1=1+\ell+\gamma_1$ we completes the proof of Theorem \ref{Th_1}.\par

\section*{Acknowledgement}
The second author would like to thank Chantal David and Dimitris Koukoulopoulos for some fruitful discussions and their  suggestions regarding the presentation of this paper.

\bibliographystyle{amsalpha}

\end{document}